\newcommand{\ignore}[1]{}
\def\th@plain{%
\thm@notefont{}
\itshape 
}
\def\th@definition{%
\thm@notefont{}
\normalfont 
}
\newtheorem{theorem}{Theorem}[section]
\newtheorem{corollary}[theorem]{Corollary}
\newtheorem{claim}[theorem]{Claim}
\newcommand{\WW}{\ensuremath{\mathbb S}}
\newcommand{\RR}{\ensuremath{\mathbb R}}
\newcommand{\R}{\ensuremath{\mathbb R}}
\newcommand{\CC}{\ensuremath{\mathbb C}}
\newcommand{\DD}{\ensuremath{\mathbb D}}
\newcommand{\lines}{\mathcal L}
\newcommand{\circs}{\mathcal C}
\newcommand{\pts}{\mathcal P}
\def\eps{{\varepsilon}}
\newcommand{\parag}[1]{\vspace{2mm}

\noindent{\bf #1}}
\title{Distance problems for planar hypercomplex numbers\thanks{This research project was done as part of the 2019 CUNY Combinatorics REU, supported by
NSF awards DMS-1802059 and DMS-1851420.}}
\author{David FitzPatrick\thanks{Princeton University, Princeton, NJ, USA. \textit{davidbf@princeton.edu}}}
\begin{document}
\pagenumbering{arabic}
\date{}
\maketitle

\begin{abstract}
We study the unit distance and distinct distances problems over the planar hypercomplex numbers: the dual numbers $\DD$ and the double numbers $\WW$.
We show that the distinct distances problem in $\WW^2$ behaves similarly to the original problem in $\R^2$.
The other three problems behave rather differently from their real analogs.
We study those three problems by introducing various notions of multiplicity of a point set.

Our analysis is based on studying the geometry of the dual plane and of the double plane.
We also rely on classical results from discrete geometry, such as the Szemer\'edi--Trotter theorem.
\end{abstract}

\section{Introduction}
The unit distance and distinct distances problems are two of the most celebrated problems in discrete geometry.
In this paper, we study those problems over the planar hypercomplex numbers: the dual numbers and the double numbers.

Dual and double numbers appear in many different fields.
For example, double numbers are used in string theory \cite{GGP96}, in signal processing \cite{MG09}, and to design algorithms for dating sites \cite{KGG12}. Dual numbers are used in kinematics \cite{Fischer00}, in the theory of schemes \cite{Hars83}, and in works studying the Erlangen program \cite{Kisil12}.
However, it seems that a combinatorial study of these numbers only started very recently in \cite{HLS12}.
The current work continues this project.

\parag{Unit distance problem.} Erd\H os \cite{erd46} suggested the unit distance problem: What is the maximum possible number of pairs of points at a distance of one from each other in a set of $n$ points in $\R^2$?
To quote from the book ``Research Problems in Discrete Geometry'' \cite{BMP06}, this is ``possibly the best known (and simplest to explain) problem in combinatorial geometry.''

By taking a set of $n$ equally-spaced points on a line, we get $n - 1$ unit distances. When Erd\H os introduced the problem, he derived an asymptotically stronger lower bound of $\Omega(n^{1+c/\log\log n})$ (for some constant $c$) and an upper bound of $O(n^{3/2})$.
Although this is a central problem in discrete geometry, in the decades that have passed, the lower bound has never been improved and the upper bound has been improved only once.
In 1984, Spencer, Szemer\'edi, and Trotter \cite{SST84} derived the bound $O(n^{4/3})$.

Many variants of the unit distance problem have also been studied.
For example, the unit distance problem has been studied in $\RR^d$ \cite{FPSSZ17,KMSS12,Zahl11}, in $\CC^2$ \cite{ST12}, and using other distance norms \cite{mat11,Valtr05}.

\parag{Distinct distances problem.} Erd\H{o}s \cite{erd46} also posed the distinct distances problem: What is the minimum number of distinct distances determined by pairs of points from a set of $n$ points in $\R^2$?
Erd\H{o}s showed that a $\sqrt{n}\times\sqrt{n}$ section of the integer lattice determines $\Theta \left( \frac{n}{\sqrt{\log n}} \right)$ distinct distances and conjectured that this was asymptotically the fewest possible.
To date, the upper bound has never been asymptotically improved.
The lower bound, however, was steadily improved over the years.
Recently, Guth and Katz \cite{GK15} introduced novel polynomial methods to derive the bound $\Omega \left( \frac n {\log n} \right)$, which matches Erd\H{o}s's conjecture up to a factor of $\sqrt{\log n}$.

The distinct distances problem also has a large number of variants, and some of the main variants remain wide open.
For example, the problem has been studied in higher dimensions \cite{SV08}, in finite fields \cite{CE12,IKP19,LP19,MRS19}, and with bipartite distances \cite{M19}.
For more information, see this book about the problem \cite{GIS11} and a survey of open distinct distances problems \cite{Sheffer14}.

\parag{Dual numbers.}
Let $\DD$ be the set of dual numbers.
This is the two-dimensional unital associative $\R$-algebra obtained by adding to $\R$ the additional element $\eps$ and the rule $\eps^2=0$.
There is a unique way to write any dual number in the standard form $x+y\eps$, where $x,y\in \RR$.
Following the terminology of the complex numbers, we refer to $x$ as the \emph{real part} and to $y$ as the \emph{imaginary part}.

We define the \emph{dual plane} $\DD^2$ to be the set of all points $p = (x + y\eps, z + w\eps)$. We define the \emph{real part} of a point $p \in \DD^2$ to be $(x, z) \in \R^2$ and the \emph{imaginary part} to be $(y, w) \in \R^2$.
For brevity, we use the notation $[x, y]$ for the dual number $x + y\epsilon \in \DD$ and $[x, y, z, w]$ for the point $(x + y\eps, z + w\eps) \in \DD^2$.
We sometimes think of $\DD^2$ as $\RR^4$, defined by the coordinates $x, y, z, w$.

When working in the dual plane $\DD^2$, we define the \emph{imaginary plane} associated with a real point $p =(x, z) \in \R^2$ to be the set of points in $\DD^2$ of the form $[x, y, z, w] \in \DD^2$.
We denote this imaginary plane by $H_p$.
In other words, $H_p$ is the set of points of $\DD^2$ that have $p$ as their real part.
Note that $H_p$ is indeed a plane when we think of $\DD^2$ as $\RR^4$.
For a line $\ell \in \R^2$, we refer to the set of points $[x, z, y, w] \in H_p$ satisfying $(y, w) \in \ell$ as the copy of $\ell$ in the imaginary plane $H_p$.

We consider the Euclidean-style distance function $\rho_{\DD^2} : \DD^2 \times \DD^2 \to \DD$, defined as
\begin{align}
\rho_{\DD^2}([x, y, z, w], [x', y', z', w']) &= \rho_{\DD^2}((x + y\eps, z + w\eps), (x' + y'\eps, z' + w'\eps)) \nonumber\\
&= ((x + y \eps) - (x' + y' \eps))^2 + ((z + w\eps) - (z' + w'\eps))^2 \nonumber\\
&= (\Delta x+\Delta y \cdot \eps)^2+(\Delta z+\Delta w \cdot \eps)^2 \nonumber\\
&= (\Delta x)^2+ (\Delta z)^2 + 2(\Delta x \Delta y + \Delta z \Delta w)\eps \nonumber\\
&= [(\Delta x)^2+ (\Delta z)^2, 2(\Delta x \Delta y + \Delta z \Delta w)]. \label{eq:distDual}
\end{align}

Functions defined in this way are a common means of measuring distances in planes over finite fields (for example, see \cite{BKT04,SdZ17}), in $\CC^2$ (see \cite{ST12}), and more. They are always symmetric.
However, $\rho_{\DD^2}(\cdot)$ is not quite a valid metric, since $\rho_{\DD^2}(p, q) =0$ does not imply $p = q$ (where $p,q\in \DD^2$).
Instead, $\rho_{\DD^2}(p, q) =0$ if and only if $p$ and $q$ have the same real part.

Consider the unit distance problem in $\DD^2$.
That is, we are interested in the maximum number of pairs satisfying $[(\Delta x)^2 + (\Delta z)^2, 2(\Delta x\Delta y + \Delta z\Delta w)] = [1, 0]$.
A previous work studying combinatorial properties of hypercomplex numbers \cite{HLS12} noted that a set of dual numbers exhibits degenerate behavior when it contains many numbers with the same real part.
This is also the case when studying unit distances in $\DD^2$.
For example, consider the set
\[ \pts = \left\{[a, 0, 0, b]\ :\ a \in \{0, 1\},\ b \in\left\{1, \dots, n \right\}\right\}. \]
Note that $\pts$ is a set of $\Theta(n)$ points in $\DD^2$ that spans $\Theta(n^2)$ unit distances.
This happens because the elements of $\pts$ have only two distinct real parts: $(0, 0)$ and $(1, 0)$.
Thus, the unit distance problem is trivial in $\DD^2$ if we allow a constant portion of the points to have the same real part.

Consider a set $\pts\subset \DD^2$.
Building on the analysis in \cite{HLS12}, we define the \emph{multiplicity} of $\pts$ as the largest integer $k$ such that there exist $k$ points of $\pts$ with the same real part.
In other words, there exists an imaginary plane $H_p$ that contains $k$ points of $\pts$.

For any $0 \leq \lambda \leq 1$, we can adapt the above construction to obtain a set with multiplicity $\Theta(n^\lambda)$ that spans $\Theta(n^{1 + \lambda})$ unit distances.
Consider the set
\begin{equation} \label{eq:MultConstruction}
\pts = \left\{[a, 0, 0, b]\ :\ a \in \left\{1, ..., n^{1 - \lambda}\right\},\ b \in \left\{1, ..., n^{\lambda}\right\}\right\}.
\end{equation}
We have that $|\pts|=n$ and that the number of unit distance spanned by $\pts$ is $\Theta(n^{1 + \lambda})$.
Indeed, two points of $\pts$ span a unit distance if and only if their $a$ values differ by one.
This implies that every point of $\pts$ spans a unit distance with $\Theta(n^\lambda)$ other points.

Next note that every point set in $\RR^2$ can be associated to a point set in $\DD^2$ that contains the same number of points, spans the same number of unit distances, and has multiplicity 1 (the minimum possible multiplicity for a nonempty point set in $\DD^2$).
Indeed, this can be done by replacing every point $(p_x,p_y)\in \R^2$ with $[p_x, 0, p_y, 0] \in \DD^2$.

Therefore, no matter how small $\lambda$ is, if we showed that every set of $n$ points in $\DD^2$ with multiplicity at most $n^{\lambda}$ spans $O(f(n))$ unit distances, then we could conclude that, in particular, every set of $n$ points in $\RR^2$ spans $O(f(n))$ distances.
Thus, regardless of the multiplicity of a point set $\pts \subset \DD^2$, we cannot expect to obtain an upper bound stronger than $O(n^{4/3})$ on the number of unit distances spanned by $\pts$.
A stronger bound would improve on the $O(n^{4/3})$ bound for unit distances in $\RR^2$, which no one has been able to do since that bound was introduced in the early 1980s.

To recap, the best upper bound we can hope to obtain for the unit distance problem in $\DD^2$ as a function of the cardinality $n$ and multiplicity $n^{\lambda}$ of the point set is $O(n^{1 + \lambda} + n^{4/3})$.
In the current work we derive this bound, up to polylogarithmic factors.

\begin{theorem} \label{th:UnitDual}
Let $\pts$ be a set of $n$ points in $\DD^2$ with multiplicity $n^\lambda$, for some $0\le \lambda \le 1$.
Then the number of unit distances spanned by $\pts$ is
\[ O \left( n^{1+\lambda} \cdot \log^4 n+n^{4/3} \cdot \log^2 n\right). \]
\end{theorem}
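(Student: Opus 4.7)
The plan is to recast the unit-distance problem as an incidence problem inside the two-dimensional imaginary planes $H_q$, and to control it by combining the Szemer\'edi--Trotter theorem with a dyadic decomposition of $\pts$ by real-part multiplicity. By \eqref{eq:distDual}, two points $[x_1,y_1,z_1,w_1]$ and $[x_2,y_2,z_2,w_2]$ of $\DD^2$ are at unit distance exactly when their real parts $q_i = (x_i, z_i) \in \R^2$ satisfy $\|q_1 - q_2\| = 1$, say in direction $\theta$, and the imaginary parts satisfy the single linear constraint $(y_2-y_1)\cos\theta + (w_2-w_1)\sin\theta = 0$. Thus, fixing $p_1 = [x_1,y_1,z_1,w_1]$ and a point $q_2$ in $Q$ (the set of distinct real parts of $\pts$) with $\|q_2 - q_1\| = 1$, the points $p_2 \in H_{q_2}$ at unit distance from $p_1$ are exactly those whose imaginary part lies on a specific line $\ell_{p_1, q_2} \subset H_{q_2}$ orthogonal to $q_2 - q_1$. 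Counting unit distances therefore reduces to a point--line incidence problem inside the imaginary planes.

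Next, partition $\pts = \bigsqcup_k \pts_k$ dyadically by real-part multiplicity: $p \in \pts_k$ iff the multiplicity of its real part lies in $[2^k, 2^{k+1})$. Since $k \le \lambda\log n$, there are $O(\log n)$ classes; let $Q_k$ denote the real parts realizing class $k$, so $|Q_k| \asymp |\pts_k|/2^k$. Split $T = \sum_{j,k} T_{j,k}$, where $T_{j,k}$ counts the ordered unit pairs with first coordinate in $\pts_j$ and second in $\pts_k$, giving $O(\log^2 n)$ subproblems. For each $(j, k)$ and each $q \in Q_k$, I apply Szemer\'edi--Trotter in the $2$-plane $H_q$ to its $m_q$ points against the lines $\ell_{p', q}$ arising from those $p' \in \pts_j$ whose real parts lie on the unit circle around $q$. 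These lines come in $|U_q \cap Q_j|$ parallel bundles of sizes $m_{q'}$, where $U_q = \{q' \in Q : \|q'-q\|=1\}$, yielding
\[
T_{j,k}(q) \;\le\; C\bigl((m_q L_{q,j})^{2/3} + m_q + L_{q,j}\bigr), \qquad L_{q,j} \;=\; \sum_{q' \in U_q \cap Q_j} m_{q'}.
\]
Summing over $q \in Q_k$, I would control $\sum_q L_{q,j}$ via the bipartite Szemer\'edi--Trotter unit-distance bound between $Q_k$ and $Q_j$, and $\sum_q (m_q L_{q,j})^{2/3}$ by H\"older's inequality. After substituting $\sum_k n_k \le n$ and $2^k \le n^\lambda$, each $T_{j,k}$ should become at most $O(n^{4/3}\log^2 n + n^{1+\lambda}\log^2 n)$; summing over the $O(\log^2 n)$ dyadic pairs yields the claimed bound.

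The main obstacle is that a direct Szemer\'edi--Trotter application loses a factor of order $n^{\lambda/3}$, because in unbalanced configurations the sum $\sum_q L_{q,j}$ can reach $n^{4/3+\lambda/3}$. Overcoming this requires exploiting the parallel-bundle structure inside $H_q$: every point of $H_q$ meets at most one line from each bundle, so a bundle of $m_{q'}$ parallel lines contributes at most $\min(m_q, m_{q'})$ incidences rather than the $(m_q m_{q'})^{2/3}$ that generic SST permits. Combining this improved parallel-line estimate with SST across the dyadic classes is what should correctly distribute the $\lambda$-dependence between the two terms $n^{1+\lambda}$ and $n^{4/3}$ of the final bound, while keeping the logarithmic losses at the stated $\log^4 n$ and $\log^2 n$ levels.
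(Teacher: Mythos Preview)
Your geometric setup and the two-parameter dyadic decomposition by real-part multiplicity are exactly right, and you have correctly located the obstacle: a direct Szemer\'edi--Trotter application inside each $H_q$ overshoots by $n^{\lambda/3}$. The gap is in your proposed fix. The claim that a bundle of $m_{q'}$ parallel lines contributes at most $\min(m_q,m_{q'})$ incidences is not true as stated. The parallel structure gives you at most $m_q$ incidences with the \emph{distinct} lines of the bundle, but you must count with multiplicity (each $p'\in\pts\cap H_{q'}$ contributes a line, and coincident lines mean several unit partners for a single $p$). Two points $p'_1,p'_2\in H_{q'}$ produce the same line in $H_q$ exactly when their imaginary parts lie on a common line orthogonal to $q-q'$; when that happens for many points, the number of unit pairs between $H_q$ and $H_{q'}$ can be $\Theta(m_q m_{q'})$, not $\min(m_q,m_{q'})$. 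The symmetric ``view from $H_{q'}$'' has the same defect, so neither side gives the $\min$ bound in general.

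What the paper does---and what your sketch is missing---is to dyadically decompose on two \emph{further} parameters $\gamma,\delta$ recording how many points of $\pts$ lie on the line $\ell(p,q)$ inside $H_{(x,z)}$ and $H_{(x',z')}$, respectively. For the rich-line regimes ($\gamma\ge1$ or $\delta\ge1$) one does not use the parallel trick at all; instead one counts $2^\gamma$-rich lines in a plane of $\Theta(2^\alpha)$ points via the dual Szemer\'edi--Trotter bound (Corollary~\ref{co:DualST}), and a short case analysis gives $I(\alpha,\beta,\gamma,\delta)=O(n^{1+\lambda})$. Only in the single case $\gamma=\delta=0$ (each relevant line contains exactly one point of $\pts$ on each side) is your $\min$ bound valid, and there it combines with Theorem~\ref{th:Szek93} on the real parts to give $O(n^{4/3})$. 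This also explains the asymmetric log factors you anticipated: the four-parameter grid has $O(\log^4 n)$ cells contributing $O(n^{1+\lambda})$ each, but only $O(\log^2 n)$ cells have $\gamma=\delta=0$ and contribute $O(n^{4/3})$.
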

In Section \ref{sec:Dual}, after proving Theorem \ref{th:UnitDual}, we show that the theorem still holds when replacing the unit distance with any other nonzero distance in $\DD$.

We now move to the distinct distances problem in $\DD^2$.
For a point set $\pts$, in any space, we denote the number of distinct distances spanned by $\pts$ by $D(\pts)$.
As before, degenerate cases arise for this problem when we allow many points with the same real part.
For example, when the real parts of all the points of a set are identical, the only distance is 0.

Consider the set $\pts$ from \eqref{eq:MultConstruction}.
This is a set of $n$ points with multiplicity $n^\lambda$ that satisfies $D(\pts)=n^{1 - \lambda}$.
Indeed, the distances spanned by $\pts$ are exactly the integers $a^2$ for $0\le a \le n^{1 - \lambda}-1$.

This suggests that we should once again try to derive a bound that is a function of the multiplicity $n^{\lambda}$ of a point set. However, for this problem, we will actually be able to obtain a bound in terms of a finer notion of multiplicity.
Recall that the multiplicity of a set $\pts\subset \DD^2$ can be thought of as the largest number of points of $\pts$ in an imaginary plane $H_p$.
We define the \emph{secondary multiplicity} of $\pts$ to be the largest $k$ such that there exists a line $\ell$ in some imaginary plane $H_p$ that is incident to $k$ points of $\pts$.
By definition, the secondary multiplicity cannot be larger than the multiplicity, but it may be significantly smaller. It is easy to see that the secondary multiplicity of the set $\pts$ in the above example is also $n^{\lambda}$, since all the points in a given imaginary plane lie on a common line. Thus, the best lower bound we can possibly hope to obtain for the distinct distances problem in $\DD^2$ in terms of the secondary multiplicity $n^{\nu}$ of the point set is $\Omega(n^{1 - \nu})$. We obtain this bound up to polylogarithmic factors:

\begin{theorem} \label{th:DistinctDual}
Let $\pts$ be a set of $n$ points in $\DD^2$ with secondary multiplicity $n^{\nu}$.
Assume that the points of $\pts$ do not all have the same real part. Then
\[ D(\pts)=\Omega \left( n^{1-\nu}\log^{-2} n \right). \]
\end{theorem}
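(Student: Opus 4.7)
The plan is to combine two elementary lower bounds on $D(\pts)$ via a dyadic pigeonhole on plane multiplicities. Let $\pts' \subset \R^2$ be the set of distinct real parts of $\pts$, put $m = |\pts'|$, and for each $p \in \pts'$ let $A_p \subset \R^2$ be the set of imaginary parts $(y,w)$ of the points of $\pts \cap H_p$, so $k_p := |A_p|$ satisfies $\sum_p k_p = n$. The secondary multiplicity hypothesis says that no line in $A_p$ contains more than $n^\nu$ points.

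The first building block is $D(\pts) \geq D_{\R^2}(S)$ for any $S \subseteq \pts'$ with $|S| \geq 2$: by \eqref{eq:distDual} the real part of any $\DD^2$-distance is the squared Euclidean distance of the corresponding real parts, so distinct squared Euclidean distances on $S$ produce distinct $\DD^2$-distances, and Guth--Katz gives $D_{\R^2}(S) = \Omega(|S|/\log |S|)$. The second building block is an imaginary-part argument: for distinct $p, q \in \pts'$ and any chosen $q^* \in \pts \cap H_q$ with imaginary part $a^* \in A_q$, a direct computation from \eqref{eq:distDual} shows that the $\DD^2$-distance from a point of $\pts \cap H_p$ with imaginary part $a \in A_p$ to $q^*$ equals $[|p-q|^2,\ 2\langle p-q,\ a - a^*\rangle]$. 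As $a$ varies over $A_p$, the imaginary part depends only on the projection of $a$ onto the direction $p-q$, and by the secondary multiplicity at most $n^\nu$ points of $A_p$ lie on any line perpendicular to $p-q$. Hence there are at least $k_p/n^\nu$ distinct projections, giving at least $k_p/n^\nu$ distinct $\DD^2$-distances sharing the real part $|p-q|^2$.

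To couple these two bounds, I would dyadically decompose $\pts'$ by multiplicity: set $H_i := \{p \in \pts' : 2^i \le k_p < 2^{i+1}\}$ for $i = 0, 1, \ldots, \lceil \log_2 n \rceil$. Since $\sum_i \sum_{p \in H_i} k_p = n$, some level $i^*$ satisfies $|H_{i^*}| \cdot 2^{i^*} = \Omega(n/\log n)$; write $h = |H_{i^*}|$ and $t = 2^{i^*}$. If $h \ge 2$, apply the first building block with $S = H_{i^*}$ to obtain $\Omega(h/\log h)$ distinct Euclidean distances, each realised by a pair $(p,q) \in H_{i^*} \times H_{i^*}$. For each such pair the second building block (with $k_p \ge t$) produces at least $t/n^\nu$ distinct $\DD^2$-distances sharing the real part $|p-q|^2$. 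Since distinct Euclidean distances correspond to disjoint real-part buckets of $\DD^2$-distances, these contributions add, yielding $D(\pts) = \Omega\bigl((h/\log h)(t/n^\nu)\bigr) = \Omega\bigl(ht/(n^\nu \log n)\bigr) = \Omega(n^{1-\nu}/\log^2 n)$. If instead $h = 1$, the unique plane in $H_{i^*}$ has at least $t = \Omega(n/\log n)$ points, and the second building block applied with any point of $\pts$ outside it (one exists by the hypothesis that not all real parts agree) already yields $\Omega(n^{1-\nu}/\log n)$ distinct $\DD^2$-distances.

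The main obstacle is exactly this coupling in the intermediate regime where neither $m$ nor the maximum multiplicity $K := \max_p k_p$ is extreme. Taking the maximum of the two single-shot bounds $m/\log m$ and $K/n^\nu$ only gives $\Omega(\sqrt{n^{1-\nu}/\log n})$, well short of the target; the dyadic pigeonhole is what turns this maximum into a product by restricting attention to a uniform-multiplicity layer on which both building blocks deliver their nontrivial strength simultaneously. The two logarithmic losses in the final bound come, respectively, from the application of Guth--Katz inside the layer and from the $\log n$ number of dyadic levels.
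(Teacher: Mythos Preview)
Your proposal is correct and matches the paper's proof essentially line for line: the paper also dyadically decomposes by plane multiplicity, pigeonholes to find a level carrying $\Omega(n/\log n)$ points, applies Guth--Katz to the real parts in that level, and then, for each distinct Euclidean distance realized there, uses the secondary-multiplicity bound on projections to produce $\Omega(2^{i^*}/n^\nu)$ distinct imaginary parts, summing over the disjoint real-part buckets; the singleton-level case is handled identically via a point outside the heavy plane. The only cosmetic difference is that the paper indexes its dyadic pieces by points of $\pts$ rather than by real parts, but your $H_{i^*}$ is exactly the paper's $\pts_{\RR}$ and your $h\cdot t$ is its $|\pts_{j_0}|$.
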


\parag{Double numbers.}
Let $\WW$ be the set of double numbers (also called the split-complex numbers and the hyperbolic numbers).
This is the two-dimensional unital associative $\R$-algebra obtained by adding to $\RR$ the additional element $j$ and the rule $j^2= 1$.
There is a unique way to write any double number in the standard form $X+Yj$, where $X,Y\in \RR$. (We use capital letters because we will soon switch to different coordinates, which we will denote by lowercase letters.)

The \emph{double plane} $\WW^2$ is the set of all pairs of the form $p = (X + Y j, Z + Wj)$.

As in the dual case, we consider the Euclidean-style distance function $\rho_{\WW^2} : \WW^2 \times \WW^2 \to \WW$, defined as
\begin{align}
& \rho_{\WW^2}((X +Yj, Z + Wj), (X' + Y'j, Z' + W'j)) \nonumber \\
&= ((X + Y j) - (X' + Y' j))^2 + ((Z + Wj) - (Z' + W'j))^2 \nonumber \\
&= (\Delta X+\Delta Y j)^2+(\Delta Z+\Delta W j)^2 \nonumber \\
&= \Delta X^2+\Delta Y^2 + \Delta Z^2 + \Delta W^2 + 2(\Delta X \Delta Y + \Delta Z \Delta W)j. \label{eq:DoubleOriginalDist}
\end{align}

We now introduce a change of coordinates in $\WW$ that significantly simplifies the distance $\rho_{\WW^2}(\cdot)$.
Given an arbitrary point $p = X + Yj \in \WW$, define the coordinates
\[x = X + Y, \qquad y = X - Y.\]
We represent the point $p$ with the notation $\langle x, y\rangle $, where $x$ and $y$ are the new coordinates defined above.
In other words, we have that
\begin{equation}
\langle x, y\rangle = X+Yj =(x+y)/2 + (x-y)j/2. \label{eq:CoordinateChangeNumber}
\end{equation}

Given a point $q = (X + Yj, Z + Wj) \in \WW^2$, we represent $q$ with the notation $\langle x, y, z, w\rangle $, where
\begin{align}
x &= X + Y, \quad y = X - Y, \quad z = Z + W, \quad w = Z - W, \text{ and } \label{eq:CoordinateChange}\\[2mm]
X &= \frac{1}{2} (x + y), \quad Y= \frac{1}{2} (x- y), \quad Z = \frac{1}{2} (z + w), \quad W = \frac{1}{2} (z - w). \nonumber
\end{align}

Combining the new coordinates with \eqref{eq:DoubleOriginalDist} gives
\begin{align}
\rho_{\WW^2}&(\langle x, y, z, w\rangle , \langle x', y', z', w'\rangle ) \nonumber \\
&= \Delta X^2+\Delta Y^2 + \Delta Z^2 + \Delta W^2 + 2(\Delta X \Delta Y + \Delta Z \Delta W)j \nonumber \\
&= \frac 1 4 (\Delta x + \Delta y)^2 + \frac 1 4 (\Delta x - \Delta y)^2 +
\frac 1 4 (\Delta z + \Delta w)^2 + \frac 1 4 (\Delta z - \Delta w)^2 \nonumber \\
&\hspace{45mm} + \frac 1 2 \big((\Delta x + \Delta y)(\Delta x - \Delta y) + (\Delta z + \Delta w)(\Delta z - \Delta w)\big)j \nonumber \\
& =\frac{1}{2}\left((\Delta x)^2 + (\Delta y)^2 + (\Delta z)^2 + (\Delta w)^2\right) + \frac{1}{2}\left((\Delta x)^2 - (\Delta y)^2 + (\Delta z)^2 - (\Delta w)^2\right)j \nonumber \\
&=\langle (\Delta x)^2 + (\Delta z)^2, (\Delta y)^2 + (\Delta w)^2\rangle. \label{eq:DoubleDist}
\end{align}

We can think of the distance function $\rho_{\WW^2}$ as the Cartesian product of the squares of two standard Euclidean metrics on $\R^2$. Note that this distance function is symmetric and that for $p, q \in \WW^2$, we have $\rho_{\WW^2}(p,q) = 0 = \langle 0, 0\rangle$ if and only if $p=q$.

Unlike in the real and dual cases, the maximum number of repetitions of a distance $d$ in a set $\pts\subset \WW^2$ depends on the value of $d$.
Consider a distance $d = \langle d_1, d_2 \rangle \neq \langle 0, 0 \rangle$.
If both $d_1$ and $d_2$ are positive, then we say that $d$ is a \emph{type A distance}.
If $d_1 = 0$ and $d_2 > 0$, we say that $d$ is a \emph{type B distance}. If $d_1 > 0$ and $d_2 = 0$, we say that $d$ is a \emph{type C distance}.
Since $d_1$ and $d_2$ are non-negative, every nonzero distance is of one of these three types.

In \cite{HLS12}, it was observed that a set $S \subset \WW$ exhibits a degenerate behavior when many elements $X + Yj$ have the same value of $X + Y$ or the same value of $X - Y$.
After the coordinate change in \eqref{eq:CoordinateChange}, this condition changes to many elements $\langle x,y \rangle$ having the same value of $x$ or the same value of $y$.

We define the \emph{real part} of a point $p = \langle x, y, z, w\rangle \in \WW^2$ to be $(x, z) \in \R^2$.
We define the \emph{imaginary part} of $p$ to be $(y, w) \in \R^2$.
We define the \emph{real multiplicity} of a set $\pts\subset \WW^2$ to be the largest integer $k_1$ such that there exist $k_1$ points in $\pts$ with the same real part. We define the \emph{imaginary multiplicity} of $\pts$ to be the largest integer $k_2$ such that there exist $k_2$ points in $\pts$ with the same imaginary part. Finally, we define the \emph{minimal multiplicity} of $\pts$ to be the minimum of its real and imaginary multiplicities. Each of these three notions of multiplicity turns out to be useful for studying repeated distances of one of the three types defined above.

\begin{theorem} \label{th:UnitDouble}
Let $\pts$ be a set of $n$ points in $\WW^2$. Let $d \in \WW$ be a non-zero distance. \\
(a) If $d$ is of type A and $\pts$ has minimal multiplicity $n^{\lambda}$ (where $0\le \lambda \le 1$), then the number of times $d$ is spanned by $\pts$ is
\[ O \left( \left(n^{4/3 + \lambda/3} + n^{1 + \lambda} \right) \log^2 n\right). \]
(b) If $d$ is of type B and $\pts$ has real multiplicity $n^{\lambda}$, then the number of times $d$ is spanned by $\pts$ is $\displaystyle O \left( n^{1 + \lambda/3}\right)$. \\
(c) If $d$ is of type C and $\pts$ has imaginary multiplicity $n^{\lambda}$, then the number of times $d$ is spanned by $\pts$ is $\displaystyle O \left( n^{1 + \lambda/3}\right)$.
\end{theorem}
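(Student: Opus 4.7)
The plan is to exploit the factorization in \eqref{eq:DoubleDist}: a pair $p, q \in \WW^2$ spans the distance $d = \langle d_1, d_2\rangle$ if and only if the real parts $p_R, q_R \in \R^2$ are at Euclidean distance $\sqrt{d_1}$ and the imaginary parts $p_I, q_I \in \R^2$ are at Euclidean distance $\sqrt{d_2}$. This decouples the $\WW^2$ distance condition into two independent $\R^2$ unit-distance problems, which I would attack using the Spencer--Szemer\'edi--Trotter bound $O(m^{4/3})$ together with its bichromatic variant $O(|A|^{2/3}|B|^{2/3}+|A|+|B|)$. Let $R$ denote the set of distinct real parts of $\pts$; for each $r\in R$, let $N(r)\subset \R^2$ be the set of imaginary parts of points of $\pts$ whose real part is $r$, and set $n_r=|N(r)|$, so $\sum_r n_r = n$.

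Parts (b) and (c) should be immediate. In (b), $d_1=0$ forces $p_R=q_R$, so every distance-$d$ pair lies inside a single fiber $N(r)$ with imaginary parts at Euclidean distance $\sqrt{d_2}$. Spencer--Szemer\'edi--Trotter bounds the contribution of each fiber by $O(n_r^{4/3})$, and since $\max_r n_r\le n^\lambda$, H\"older gives
\[ \sum_r n_r^{4/3} \le \bigl(\max_r n_r\bigr)^{1/3}\sum_r n_r \le n^{\lambda/3}\cdot n = O(n^{1+\lambda/3}). \]
Part (c) is identical after swapping the roles of the real and imaginary parts.

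For part (a), with $d_1,d_2>0$, I would group contributing pairs by their pair of real parts. By the symmetry between the two coordinate planes, I may assume WLOG that the real multiplicity realizes the minimum~$n^\lambda$. For each pair of distinct $r,r'\in R$ at Euclidean distance $\sqrt{d_1}$, the number of contributing pairs equals the number of bichromatic $\sqrt{d_2}$-distances between $N(r)$ and $N(r')$, bounded by $O((n_r n_{r'})^{2/3}+n_r+n_{r'})$. Letting $E_R$ be the $\sqrt{d_1}$-distance graph on $R$, the count of type-A distance-$d$ pairs is at most
\[ \sum_{(r,r')\in E_R}\bigl[(n_r n_{r'})^{2/3}+n_r+n_{r'}\bigr]. \]
I would evaluate this sum by a dyadic decomposition $R=\bigcup_k R_k$ with $R_k=\{r:2^{k-1}\le n_r<2^k\}$ for $k=0,1,\ldots,O(\log n)$, bounding $|E_R\cap(R_k\times R_{k'})|$ by the bichromatic unit-distance estimate and repeatedly using the pigeonhole inequality $2^k|R_k|\le 2n$. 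The $(n_r n_{r'})^{2/3}$ part breaks into pieces of size $(2^k|R_k|)^{2/3}(2^{k'}|R_{k'}|)^{2/3}\le(2n)^{4/3}$, summing over $k,k'$ to $O(n^{4/3}\log^2 n)$. The $n_r+n_{r'}$ part breaks into a main piece $2^k(|R_k||R_{k'}|)^{2/3}$ whose dyadic sum yields $O(n^{4/3+\lambda/3})$, using $\sum_k 2^{k/3}=O(n^{\lambda/3})$ since $k$ runs over $O(\lambda\log n)$ values, plus an additive piece $O(n^{1+\lambda})$ coming from $\sum_k 2^k\le 2n^\lambda$ multiplied by $|R|\le n$.

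The hard part will be the bookkeeping of these dyadic sums. I must verify that both target shapes $n^{4/3+\lambda/3}$ and $n^{1+\lambda}$ actually emerge and absorb all cross-terms (for example $n^{1+2\lambda/3}$, easily checked to be at most $n^{4/3+\lambda/3}$ for $\lambda\in[0,1]$), while the polylogarithmic overhead stays at $\log^2 n$. The three summands of the bichromatic bound must be tracked separately across the dyadic slabs, since each is responsible for a different dominant contribution in the final estimate.
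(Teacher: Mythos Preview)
Your proposal is correct and follows essentially the same approach as the paper: decouple via \eqref{eq:DoubleDist}, handle types B/C fiberwise with the $\sum n_r^{4/3}\le(\max n_r)^{1/3}\sum n_r$ estimate, and for type A dyadically slice the real parts and combine the bichromatic unit-distance bounds at the real and imaginary levels. The only cosmetic difference is that the paper uses pigeonhole to select a single dyadic block $(j_0,j_1)$ carrying an $\Omega(t/\log^2 n)$ fraction of the pairs and bounds that block directly, whereas you plan to sum over all blocks; both routes produce the same four cross-terms $n^{4/3}$, $n^{4/3+\lambda/3}$, $n^{1+2\lambda/3}$, $n^{1+\lambda}$ and the same $\log^2 n$ overhead.
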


In Section \ref{sec:Double}, after proving Theorem \ref{th:UnitDouble}, we provide lower bound constructions for the problem.
These show that Theorem \ref{th:UnitDouble} cannot be improved without also improving the upper bound for the unit distance problem in $\RR^2$ (except possibly for the $\log^2 n$ factor in part (a)).

Unlike the three problems considered above, it turns out that the distinct distances problem in $\WW^2$ behaves similarly to the distinct distances problem in $\RR^2$.
In this case, we get a bound that is tight up to polylogarithmic factors and does not depend on any multiplicity.

\begin{theorem} \label{th:DistinctDouble}
Every set of $n$ points in $\WW^2$ determines $\Omega \left( \frac{n}{\log^3 n} \right)$ distinct distances.
\end{theorem}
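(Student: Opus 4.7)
For $p = \langle x,y,z,w\rangle \in \WW^2$ write $\phi(p) = (x,z) \in \R^2$ for its real part and $f(p) = (y,w) \in \R^2$ for its imaginary part; by \eqref{eq:DoubleDist}, $\rho_{\WW^2}(p,q) = \langle|\phi(p)-\phi(q)|^2,\,|f(p)-f(q)|^2\rangle$ factors as a pair of ordinary squared Euclidean distances in $\R^2$, and two $\WW^2$-distances coincide iff both coordinates match. Let $A = \phi(\pts)$ and $B = f(\pts) \subset \R^2$ be the sets of distinct real and imaginary parts; for $a \in A$ set $B_a = \{b : (a,b) \in \pts\}$, $m_a = |B_a|$, and analogously $A_b,\,m'_b$ for $b \in B$. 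Four immediate lower bounds follow by applying the Guth--Katz theorem in $\R^2$: (i) every $\alpha \in D(A)$ appears as the first coordinate of some distance in $\pts$, and distinct first coordinates give distinct $\WW^2$-distances, so $D(\pts) \ge |D(A)| = \Omega(|A|/\log|A|)$; (ii) symmetrically $D(\pts) \ge \Omega(|B|/\log|B|)$; (iii) within the row $\pts_a$ all pair distances have the form $\langle 0,\beta\rangle$ with $\beta \in D(B_a)$ and $|B_a| = m_a$, so $D(\pts) \ge \Omega(m_a/\log m_a)$; (iv) symmetrically for columns, $D(\pts) \ge \Omega(m'_b/\log m'_b)$.

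Each of these already settles the theorem as soon as one of $|A|, |B|, \max_a m_a, \max_b m'_b$ reaches $n/\log^2 n$, since the corresponding bound is then $\Omega(n/\log^3 n)$. The remaining \emph{hard case} is when all four quantities lie in $(\log^2 n,\,n/\log^2 n)$; the inequality $|A|\cdot\max_a m_a \ge n$ (and symmetrically for $B$) forces any deviation out of this range into one of the easy cases.

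For the hard case my plan is a dyadic pigeonhole on row multiplicities: partition $A$ into $O(\log n)$ classes $A^{(k)} = \{a : 2^k \le m_a < 2^{k+1}\}$ and select the class $A^*$ of largest total weight, so $\sum_{a\in A^*} m_a \ge n/\log n$ and $|A^*| \gtrsim n/(M\log n)$ where $M = 2^{k^*}$ is the common scale of row sizes in $A^*$. Guth--Katz applied to the set $A^*$ contributes $\Omega(n/(M\log^2 n))$ distinct distances with $\alpha>0$, while Guth--Katz applied to any row $B_a$ with $a\in A^*$ contributes $\Omega(M/\log n)$ distinct distances of the form $\langle 0,\beta\rangle$; these two contributions are disjoint in type and therefore add.

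The main obstacle is that this sum, balanced over $M$ via AM--GM, only yields $\Omega(\sqrt{n}/\log^{3/2}n)$ -- short of the target by a factor of $\sqrt{n}$. Closing the gap is the crux: one must arrange for the real-part and imaginary-part Guth--Katz contributions to \emph{multiply} rather than merely add. I expect the completion requires a second dyadic pigeonhole on column multiplicities inside $A^*\times B$ that isolates a sub-block of $\pts$ in which both row and column sizes are simultaneously roughly uniform; on such a balanced block a bipartite distinct-distances estimate applied to $B_{a_1}\cup B_{a_2}$ for pairs $a_1,a_2\in A^*$ should produce, summed over the $|D(A^*)|$ distinct values of $\alpha$, roughly $|D(A^*)|\cdot \Omega(M/\log n) = \Omega(n/\log^3 n)$ distinct type-$\langle\alpha,\beta\rangle$ pairs with both coordinates positive. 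The three $\log n$ factors in the final bound would then correspond, respectively, to Guth--Katz on the real projection, Guth--Katz on the imaginary projection, and the (combined) dyadic pigeonhole over multiplicities.
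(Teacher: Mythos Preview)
Your final paragraph is not a speculative completion---it \emph{is} the proof, and it is exactly what the paper does. After the single dyadic pigeonhole on row multiplicities you already have $|A^*| = \Omega(n/(M\log n))$ with every $B_a$ ($a\in A^*$) of size $\Theta(M)$. Apply Guth--Katz to $A^*$ to get $|D(A^*)| = \Omega(n/(M\log^2 n))$; for each $\alpha\in D(A^*)$ fix one pair $a_1,a_2\in A^*$ realizing it and apply the bipartite distinct-distances bound (Theorem~\ref{th:M19}) to the pair $(B_{a_1},B_{a_2})$, each of size $\Theta(M)$, to obtain $\Omega(M/\log n)$ distinct second coordinates $\beta$. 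Distances $\langle\alpha,\beta\rangle$ with different $\alpha$ are automatically distinct, so the contributions multiply:
\[
D(\pts)\;\ge\;|D(A^*)|\cdot\Omega\!\left(\frac{M}{\log n}\right)\;=\;\Omega\!\left(\frac{n}{\log^3 n}\right).
\]

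Two things in your write-up are superfluous and obscure this. First, the ``second dyadic pigeonhole on column multiplicities'' is not needed: the first pigeonhole already makes all rows in $A^*$ of comparable size $\Theta(M)$, which is precisely the uniformity the bipartite bound requires. You never need column sizes to be uniform. Second, the entire easy/hard case split at the start is unnecessary scaffolding: the argument above works for every value of $M$ uniformly, with no need to handle extreme $|A|$, $|B|$, or $\max m_a$ separately. The paper's proof is just your last paragraph, executed directly after one dyadic step, in about ten lines.
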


In Section \ref{sec:Double}, we also describe a set $\pts\subset \WW^2$ of $n$ points with $D(\pts) = \Theta(n/\log n)$.
This leaves a gap of $\log^2 n$ for the distinct distances problem in $\WW^2$.

\section{Preliminaries}
Let $\pts$ be a set of points and let $\lines$ be a set of lines, both in $\RR^2$.
A point--line pair $(p,\ell)\in \pts\times\lines$ is an \emph{incidence} if $p$ is on $\ell$.
We denote by $I(\pts,\lines)$ the number of incidences in $\pts\times\lines$.

\begin{theorem}[Szemer\'edi--Trotter \cite{ST83}] \label{th:ST83}
Let $\pts$ be a set of $m$ points and let $\lines$ be a set of $n$ lines, both in $\RR^2$.
Then
\[ I(\pts,\lines)=O\left(m^{2/3}n^{2/3}+m+n\right). \]
\end{theorem}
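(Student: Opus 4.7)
The plan is to give Székely's crossing-number proof, which is the shortest route to the stated bound. First I would assume without loss of generality that every line in $\lines$ contains at least one point of $\pts$, since removing empty lines only decreases $n$ and can only help. I then build a graph $G$ drawn in the plane whose vertex set is $\pts$: on each line $\ell\in\lines$ containing $k_\ell\ge 1$ points, I insert the $k_\ell-1$ edges joining consecutive incident points along $\ell$, drawn as segments of $\ell$. Because two distinct points of $\pts$ lie on at most one common line, $G$ is a simple graph with exactly $I(\pts,\lines)-n$ edges, and since any two edges that cross come from a pair of distinct lines in $\lines$, the chosen drawing has at most $\binom{n}{2}$ crossings.

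Next I would invoke the crossing lemma: any simple graph drawn in the plane with $v$ vertices and $e\ge 4v$ edges has crossing number at least $e^3/(64v^2)$. Applied with $v=m$ and $e=I(\pts,\lines)-n$, this gives

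\[
\frac{(I(\pts,\lines)-n)^3}{64\,m^2} \;\le\; \binom{n}{2},
\]

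provided $I(\pts,\lines)-n\ge 4m$; rearranging yields $I(\pts,\lines)=O(m^{2/3}n^{2/3})$ in that regime. If instead $I(\pts,\lines)-n < 4m$, then $I(\pts,\lines)=O(m+n)$ is immediate. Combining the two cases gives the claimed bound.

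The main technical input is the crossing lemma itself, which I would prove from Euler's formula. A planar simple graph with $v\ge 3$ vertices has at most $3v-6$ edges, from which one extracts the weak bound $\mathrm{cr}(G)\ge e-3v$ for any simple graph drawn in the plane. The sharper inequality is then obtained by a standard probabilistic sampling argument: keep each vertex of $G$ independently with probability $p$, apply the weak bound to the induced subdrawing, take expectations to get $p^4\,\mathrm{cr}(G)\ge p^2 e - 3pv$, and optimize $p\asymp v/e$ to obtain $\mathrm{cr}(G)=\Omega(e^3/v^2)$. The only subtlety to watch for is that the optimizing $p$ must lie in $[0,1]$, which is exactly where the hypothesis $e\ge 4v$ enters; this is also the step where the two linear terms $m$ and $n$ in the final bound appear, since they correspond precisely to the two degenerate regimes in which the crossing-lemma machinery is not applicable.
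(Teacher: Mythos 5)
The paper does not prove this statement at all: it is quoted as a classical result with a citation to Szemer\'edi--Trotter \cite{ST83} and used as a black box in the Preliminaries. Your proposal is Sz\'ekely's crossing-number proof, and it is correct and complete: discarding empty lines is legitimate since the bound is monotone in $n$; the graph is simple because two points determine at most one line, and it has exactly $I(\pts,\lines)-n$ edges once every line is nonempty; the drawing has at most $\binom{n}{2}$ crossings since edges on a common line are internally disjoint and edges on distinct lines cross at most once; and the crossing lemma (with the sampling argument requiring $e\ge 4v$, i.e.\ $p=4v/e\le 1$) together with the degenerate case $I(\pts,\lines)-n<4m$ yields $O\left(m^{2/3}n^{2/3}+m+n\right)$ exactly as claimed. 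This is a standard and arguably more elementary route than the original cell-decomposition proof of \cite{ST83}, and it would serve as a self-contained justification of the theorem if one were wanted.
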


Given a point set $\pts\subset \RR^2$ and an integer $r$, we say that a line $\ell$ is $r$-\emph{rich} if $\ell$ is incident to at least $r$ points of $\pts$.
We similarly define $r$-rich circles, planes, and other objects.
The following is known as a dual form of Theorem \ref{th:ST83}, in the sense that each result can be easily derived from the other.

\begin{corollary} \label{co:DualST}
Let $\pts$ be a set of $n$ points in $\RR^2$ and let $r\ge2$ be an integer.
Then the number of $r$-rich lines is
\[ O\left(\frac{n^2}{r^3} + \frac{n}{r}\right).\]
\end{corollary}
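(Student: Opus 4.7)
The plan is to derive the corollary from Theorem \ref{th:ST83} by a standard double-counting argument on incidences. Let $\lines_r$ denote the set of $r$-rich lines determined by $\pts$; our goal is to bound $|\lines_r|$. Since each line in $\lines_r$ is incident to at least $r$ points of $\pts$, summing over all such lines gives the lower bound
\[ I(\pts, \lines_r) \geq r \cdot |\lines_r|. \]

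Next, I would apply Theorem \ref{th:ST83} to the same point set $\pts$ and line set $\lines_r$ to obtain the matching upper bound
\[ I(\pts, \lines_r) = O\!\left(n^{2/3}|\lines_r|^{2/3} + n + |\lines_r|\right). \]
Combining the two inequalities yields $r \cdot |\lines_r| \leq C \bigl(n^{2/3}|\lines_r|^{2/3} + n + |\lines_r|\bigr)$ for some absolute constant $C$. As long as $r \geq 2C$, the term $C|\lines_r|$ on the right can be absorbed into the left side, leaving $r\cdot|\lines_r|/2 \leq C\bigl(n^{2/3}|\lines_r|^{2/3} + n\bigr)$. Then whichever of the two remaining terms dominates gives us the corresponding bound: either $r|\lines_r| \lesssim n^{2/3}|\lines_r|^{2/3}$, which rearranges to $|\lines_r| = O(n^2/r^3)$, or $r|\lines_r| \lesssim n$, giving $|\lines_r| = O(n/r)$.

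Finally, I would handle the remaining range $2 \leq r < 2C$ separately; there the claimed bound is trivial since $r$ is a constant and the total number of lines spanned by $n$ points is at most $\binom{n}{2} = O(n^2) = O(n^2/r^3)$. The main (mild) subtlety of the argument is just being careful to absorb the linear-in-$|\lines_r|$ term from Szemer\'edi--Trotter and to verify that the small-$r$ regime is covered; otherwise the proof is routine.
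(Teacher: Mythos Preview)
Your argument is correct and is exactly the standard derivation of the rich-lines bound from Szemer\'edi--Trotter. The paper does not actually spell out a proof of this corollary; it simply remarks that the result ``can be easily derived'' from Theorem~\ref{th:ST83}, and your double-counting argument is precisely that easy derivation.
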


Note that the term $n^2r^{-3}$ dominates the bound of Corollary \ref{co:DualST} when $r =O(n^{1/2})$. The term $nr^{-1}$ dominates the bound when $r =\Omega(n^{1/2})$. And note that when $r = 1$, there are infinitely many $r$-rich lines for any non-empty set $\pts$.

We also rely on a bound for incidences with unit circles (see for example \cite[Theorem 8]{Szek93}).

\begin{theorem} \label{th:Szek93}
Let $\pts$ be a set of $m$ points and let $\circs$ be a set of $n$ unit circles, both in $\RR^2$.
Then
\[ I(\pts,\circs)=O\left(m^{2/3}n^{2/3}+m+n\right). \]
\end{theorem}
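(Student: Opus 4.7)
The plan is to prove this by Székely's crossing-number method, which is the classical route to the Spencer--Szemerédi--Trotter unit-circle bound. First I would reduce to the case where every circle in $\circs$ is incident to at least two points of $\pts$; circles incident to zero or one points of $\pts$ contribute at most $n$ incidences in total, which can be absorbed into the additive $n$ term of the final bound.

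Next I would build a topological multigraph $G$ drawn in $\RR^2$ as follows. The vertex set is $\pts$, and for each $C \in \circs$ that is incident to $k \geq 2$ points of $\pts$, I insert the $k$ arcs of $C$ that connect cyclically consecutive incident points. This gives $|E(G)| \geq I(\pts,\circs) - n$. The key geometric input is that any two distinct points of $\RR^2$ lie on at most two unit circles, so each unordered pair of vertices receives at most two parallel edges; that is, $G$ has edge multiplicity at most $2$. Moreover, two distinct unit circles meet in at most two points, so arcs coming from different circles cross at most twice in the drawing, giving a crossing count of at most $2\binom{n}{2} = O(n^2)$.

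Now I would invoke the multigraph form of the crossing lemma: a multigraph on $m$ vertices with edge multiplicity at most $c$ satisfies
\[ \mathrm{cr}(G) = \Omega\!\left( \frac{|E(G)|^3}{c^2\, m^2} \right) \]
whenever $|E(G)| \geq 4cm$. Applying this with $c = 2$ and comparing with the upper bound $\mathrm{cr}(G) = O(n^2)$ yields $|E(G)| = O(m^{2/3}n^{2/3})$ provided $|E(G)| \geq 8m$; otherwise the trivial bound $|E(G)| = O(m)$ applies. In either case,
\[ I(\pts,\circs) \leq |E(G)| + n = O\!\left( m^{2/3}n^{2/3} + m + n \right), \]
which is the desired estimate.

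The main subtlety, and the step that must not be glossed over, is the edge-multiplicity bound. Unlike the line case --- where two points determine a unique line and the resulting graph is simple --- two points of $\RR^2$ can lie on two distinct unit circles, so one cannot directly apply the simple-graph crossing lemma; the multigraph version must be used, with the constant $c^2 = 4$ carried through the calculation. Everything else, including the initial removal of low-incidence circles and the case split on whether $|E(G)|$ exceeds $8m$, is routine bookkeeping in the standard Székely framework.
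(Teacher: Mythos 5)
The paper does not actually prove this statement: it is quoted as a known result from Sz\'ekely's paper, and your proposal reconstructs essentially that original crossing-number argument, so in substance you are following the same route as the cited source. The argument is sound except for one slip in precisely the step you single out as the delicate one. Having discarded only the circles with zero or one incident point, you retain circles containing exactly two points of $\pts$, and such a circle contributes \emph{both} of its arcs as edges between the same pair of vertices; since two points of $\RR^2$ can lie on two distinct unit circles, the edge multiplicity of your multigraph can then be $4$, not $2$ as claimed. This does not endanger the theorem: either discard all circles with at most two incident points (losing at most $2n$ incidences, absorbed into the additive $n$ term), after which every surviving circle has at least three incident points, each consecutive pair receives exactly one arc per circle, and the multiplicity is indeed at most $2$; or keep your pruning and apply the multigraph crossing lemma with $c=4$. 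Either way the comparison of $\mathrm{cr}(G)=O(n^2)$ with $\mathrm{cr}(G)=\Omega\left(|E(G)|^3/(c^2m^2)\right)$ gives $|E(G)|=O\left(m^{2/3}n^{2/3}+m\right)$ and hence $I(\pts,\circs)=O\left(m^{2/3}n^{2/3}+m+n\right)$, so the exponents and the stated bound are unaffected.
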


Theorem \ref{th:Szek93} implies the current best bound $O(n^{4/3})$ for the unit distance problem in $\RR^2$.
We also rely on bounds for the distinct distances problem in $\R^2$.
The following result is from the seminal work of Guth and Katz \cite{GK15}.

\begin{theorem} \label{th:GK15 2}
Every set of $n$ points in $\RR^2$ determines $\Omega \left( \frac{n}{\log n} \right)$ distinct distances.
\end{theorem}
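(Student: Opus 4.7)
The proof plan builds on two immediate consequences of the $\WW^2$ distance formula \eqref{eq:DoubleDist}: writing $R(p) = (x, z)$ and $I(p) = (y, w)$ for a point $p = \langle x, y, z, w\rangle \in \WW^2$, we have $\rho_{\WW^2}(p, q) = \langle |R(p) - R(q)|^2, |I(p) - I(q)|^2\rangle$, where $|\cdot|$ denotes the Euclidean norm on $\R^2$. Let $\pts_R, \pts_I \subset \R^2$ denote the sets of distinct real and distinct imaginary parts of $\pts$, and set $m_R = |\pts_R|$, $m_I = |\pts_I|$. Since a point of $\WW^2$ is uniquely determined by its real and imaginary parts, the pair map is injective and $n \leq m_R m_I$.

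The first key observation is a \emph{projection bound}: $D(\pts) \geq D(\pts_R)$ and $D(\pts) \geq D(\pts_I)$, because distinct squared Euclidean distances in $\pts_R$ yield distinct first coordinates of $\WW^2$ distances, and symmetrically for $\pts_I$. The second is an \emph{intra-class bound}: for any $r \in \pts_R$, the set $\pts_r := \{p \in \pts : R(p) = r\}$ has $|\pts_r|$ points with pairwise distinct imaginary parts, so applying Theorem \ref{th:GK15 2} to these imaginary parts in $\R^2$ shows that $\pts_r$ contributes $\Omega(|\pts_r|/\log |\pts_r|)$ distinct type-B $\WW^2$ distances of the form $\langle 0, \cdot\rangle$ to $D(\pts)$.

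With these in hand I would proceed by case analysis on $m_R$, $m_I$, and the maximal real and imaginary multiplicities $k_R, k_I$. If $\max(m_R, m_I) \geq n/\log^2 n$, the projection bound together with Theorem \ref{th:GK15 2} immediately gives $D(\pts) = \Omega(n/\log^3 n)$. If $\max(k_R, k_I) \geq n/\log^2 n$, the intra-class bound applied to the heaviest class gives the same. In the remaining intermediate regime where all four parameters lie in $(\log^2 n, n/\log^2 n)$, I would apply a dyadic decomposition of $\pts$ by real-part multiplicity to isolate a scale $k$ such that at least $n/\log n$ points lie in real-part classes of multiplicity $\Theta(k)$, so the number of such distinct real parts is $\Omega(n/(k \log n))$. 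Applying Theorem \ref{th:GK15 2} to these representative real parts yields $\Omega(n/(k \log^2 n))$ distinct squared first coordinates of $\WW^2$ distances; then for each such first coordinate I would argue, via a bipartite distinct-distance count between the imaginary parts of two classes realizing the corresponding real-part distance, that $\Omega(k/\log k)$ distinct second coordinates arise. Multiplying yields the target bound $\Omega(n/\log^3 n)$.

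The main obstacle is this intermediate regime. Bipartite distinct distances between two arbitrary imaginary-part classes can be much smaller than Guth--Katz on their union---witness two concentric circular arrangements, where every bipartite distance coincides. Overcoming this likely requires averaging over the many pairs of real-part classes that realize a given real-part distance, and invoking the incidence tools of Section 2 (Theorem \ref{th:ST83}, Corollary \ref{co:DualST}, and Theorem \ref{th:Szek93}) as a substitute for a clean bipartite Guth--Katz bound. The loss of two additional $\log n$ factors beyond Theorem \ref{th:GK15 2} is consistent with this combination of dyadic decomposition and reduction to bipartite counting.
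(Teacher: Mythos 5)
There is a fundamental mismatch here: the statement you were asked to prove is the Guth--Katz theorem itself, namely that every set of $n$ points in $\RR^2$ spans $\Omega(n/\log n)$ distinct distances. The paper does not prove this result; it is quoted verbatim as a preliminary, with the ``proof'' being the citation to Guth and Katz \cite{GK15}, whose argument rests on the Elekes--Sharir reduction to line incidences in $\RR^3$ and polynomial partitioning --- none of which appears in your proposal. What you have written is instead an attempted proof of the distinct distances theorem in $\WW^2$ (Theorem \ref{th:DistinctDouble}, with the weaker bound $\Omega(n/\log^3 n)$), and it invokes Theorem \ref{th:GK15 2} repeatedly as an ingredient (``applying Theorem \ref{th:GK15 2} to these imaginary parts,'' ``the projection bound together with Theorem \ref{th:GK15 2} immediately gives \dots''). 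As a proof of Theorem \ref{th:GK15 2} this is circular, and so there is no proof of the stated theorem here at all.

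Even read charitably as a sketch of Theorem \ref{th:DistinctDouble}, the ``main obstacle'' you flag is not real. The bipartite lower bound you need between the imaginary parts of two real-part classes is exactly Theorem \ref{th:M19} (Mathialagan): for sets of sizes $m\le n$ in $\RR^2$ with $m\ge 2$, the bipartite number of distinct distances is $\Omega(\sqrt{mn}/\log n)$, with no hypothesis on the configuration. Your proposed counterexample of two concentric circular arrangements does not work: the distance between a point on a circle of radius $r_1$ and a point on a concentric circle of radius $r_2$ depends on the angular difference and sweeps the whole interval $[\,|r_1-r_2|,\,r_1+r_2\,]$, so those bipartite distances are far from coinciding (only distances measured from the common center collapse, and that requires one side to be a single point). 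The paper's proof of Theorem \ref{th:DistinctDouble} is precisely your intermediate-regime plan made rigorous: a dyadic pigeonholing by real-part multiplicity to find a popular scale $2^{j_0}$, Theorem \ref{th:GK15 2} applied to the $\Omega(n/(2^{j_0}\log n))$ distinct real parts, and then, for each real-part distance $d$, Theorem \ref{th:M19} applied to the imaginary-part classes $S,T$ of size $\Theta(2^{j_0})$ to get $\Omega(2^{j_0}/\log n)$ second coordinates, the products over distinct $d$ being disjoint by \eqref{eq:DoubleDist}; no case analysis on $m_R,m_I,k_R,k_I$ and no averaging over pairs of classes is needed.
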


In a \emph{bipartite} distinct distances problem, we are interested in the minimum number of distinct distances between two (not necessarily disjoint) sets $\pts_1, \pts_2 \subset \R^2$.
That is, we consider the distances between pairs of points $x$ and $y$ with $(x, y) \in \pts_1\times \pts_2$.
We denote the number of such distinct distances by $D(\pts_1,\pts_2)$.
The following bound was derived in \cite{M19}.

\begin{theorem} \label{th:M19}
Let $\pts_1$ be a set of $n$ points and let $\pts_2$ be a set of $m$ points, both in $\RR^2$, such that $2 \leq m \leq n$.
Then $\displaystyle D(\pts_1,\pts_2) = \Omega \left( \sqrt{mn}/ \log n \right)$.\end{theorem}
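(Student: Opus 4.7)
The plan is to adapt the Elekes-Sharir-Guth-Katz framework underlying Theorem \ref{th:GK15 2} to the bipartite setting. For each $d>0$ let $E(d) = |\{(a,b)\in\pts_1\times\pts_2 : |ab|=d\}|$, and set $T = \sum_{d>0} E(d)^2$, which counts ordered quadruples $(a_1,a_2,b_1,b_2)\in\pts_1^2\times\pts_2^2$ with $|a_1b_1|=|a_2b_2|>0$. Since $\sum_{d>0} E(d) \ge nm - \min(n,m) = \Omega(nm)$ under the hypothesis $m \ge 2$, Cauchy-Schwarz over the $D = D(\pts_1,\pts_2)$ distinct positive distance values yields
\[
T \;\ge\; \frac{\bigl(\sum_{d>0} E(d)\bigr)^2}{D} \;=\; \Omega\!\left(\frac{(nm)^2}{D}\right).
\]
Hence it suffices to prove $T = O\bigl((nm)^{3/2}\log n\bigr)$.

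Next I would invoke the Elekes-Sharir reduction: in the three-dimensional parameter space of orientation-preserving planar rigid motions, for each ordered pair of distinct points $(p,q)$, the locus of motions sending $p$ to $q$ is a line (after a standard coordinate change). Defining
\[
L_1 = \{\ell_{a_1,a_2} : a_1,a_2 \in \pts_1,\, a_1\neq a_2\}, \qquad L_2 = \{\ell_{b_1,b_2} : b_1,b_2 \in \pts_2,\, b_1\neq b_2\},
\]
with $|L_1|=\Theta(n^2)$ and $|L_2|=\Theta(m^2)$, an intersection $\ell_{a_1,a_2}\cap\ell_{b_1,b_2}\neq\emptyset$ corresponds exactly to a rigid motion sending $(a_1,b_1)$ to $(a_2,b_2)$, which exists iff $|a_1b_1|=|a_2b_2|$. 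After a parallel argument in the parameter space of orientation-reversing isometries, $T$ is bounded by a constant multiple of the number of intersecting pairs in $L_1\times L_2$.

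The third step is to apply a bipartite adaptation of the Guth-Katz polynomial partitioning method to bound this intersection count. A polynomial partitioning applied to $L_1 \cup L_2$ in $\R^3$, together with the Elekes-Sharir structural estimates, produces the target bound $O\bigl((nm)^{3/2}\log n\bigr)$, which specializes to the unipartite Guth-Katz bound $O(n^3\log n)$ when $m=n$. Combined with the Cauchy-Schwarz step, this gives $D(\pts_1,\pts_2) = \Omega(\sqrt{mn}/\log n)$, as desired.

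The main obstacle is the non-degeneracy analysis inherent to any Guth-Katz style argument: a plane or regulus containing many lines from $L_i$ forces many points of $\pts_i$ to lie on a common line or circle. In the bipartite setting one must ensure that when $\pts_1$ and $\pts_2$ are both structured enough to cause degeneracy, the resulting configuration can be treated separately by a planar incidence bound, using Theorem \ref{th:ST83} for lines and Theorem \ref{th:Szek93} for circles. Balancing these exceptional cases against the generic partition-based bound, and checking that the bipartite version of the line-line analysis retains the correct dependence on the two parameters $n$ and $m$, is where the bulk of the technical work lies.
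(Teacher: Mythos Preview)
The paper does not contain a proof of Theorem~\ref{th:M19}. This result is quoted in the Preliminaries section and attributed to \cite{M19}; it is used as a black box in the proof of Theorem~\ref{th:DistinctDouble}. So there is no ``paper's own proof'' to compare your proposal against.

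That said, your outline is indeed the strategy behind the result in \cite{M19}: the Elekes--Sharir reduction followed by a bipartite adaptation of the Guth--Katz incidence argument. However, what you have written is a proof \emph{sketch}, not a proof. You explicitly flag the non-degeneracy analysis as ``the main obstacle'' and ``where the bulk of the technical work lies,'' and you do not carry it out. In the bipartite setting one must bound, for each family $L_i$, the number of lines lying in a common plane or regulus, and then show that the exceptional contributions from such configurations (many points of $\pts_1$ or $\pts_2$ on a line or circle) are dominated by $O((nm)^{3/2}\log n)$. You also need the bipartite analogues of the Guth--Katz auxiliary lemmas (bounds on $2$-rich points for two line families, the cell/partition counting with two parameters, etc.), none of which are stated or proved here. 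As written, the proposal identifies the correct framework but leaves the substantive part of the argument as an exercise; it would not be accepted as a complete proof.
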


\section{Dual numbers} \label{sec:Dual}

We now prove our results in the dual plane $\DD^2$.
We restate each result before proving it.
\vspace{2mm}

\noindent {\bf Theorem \ref{th:UnitDual}.}
\emph{Let $\pts$ be a set of $n$ points in $\DD^2$ with multiplicity $n^\lambda$, for some $0\le \lambda \le 1$.
Then the number of unit distances spanned by $\pts$ is}
\[ O \left( n^{1+\lambda} \cdot \log^4 n+n^{4/3} \cdot \log^2 n\right). \]

\begin{proof}
Consider points $p=[x,y,z,w]$ and $q=[x', y', z', w']$ in $\DD^2$ that span a unit distance.
In other words, we have
\[ [1, 0] = \rho_{\DD^2}(p, q) =\left[(x-x')^2+(z-z')^2, 2((x-x')(y-y')+(z-z')(w-w'))\right]. \]
Splitting this into real and imaginary parts, we get
\begin{align}
(x-x')^2+(z-z')^2 &=1, \label{eq:UnitCirc1} \\[2mm]
(x-x')(y-y') + (z-z')(w-w') &= 0. \label{eq:UnitCirc2}
\end{align}

By \eqref{eq:UnitCirc1}, the real parts $(x, z)$ and $(x', z')$ span a unit distance in $\R^2$.
By \eqref{eq:UnitCirc2}, the vector $(y - y', w - w')$ is perpendicular to the vector $(x - x', z - z')$.
Let $\ell(p, q)$ be the line in $\R^2$ that is incident to $(y, w)$ and has direction orthogonal to the vector $(x - x', z- z')$. By definition, $\ell(p, q)$ is incident to both $(y, w)$ and $(y', w')$.
Thus, the copy of $\ell(p, q)$ in the imaginary plane $H_{(x, z)}$ is incident to $p$ and the copy of $\ell(p, q)$ in $H_{(x', z')}$ is incident to $q$.

\parag{Dyadically decomposing the problem.}
For any integers $0 \leq \alpha, \beta, \gamma, \delta \leq \log n$, define $I(\alpha, \beta, \gamma, \delta)$ to be the number of pairs of points $p = [x, y, z, w]$ and $q = [x', y', z', w']$ in $\pts$ that define a unit distance and satisfy the following properties:
\begin{itemize}
\item The number of points of $\pts$ that have the same real part as $p$ is at least $2^{\alpha}$ and smaller than $2^{\alpha + 1}$. Equivalently, this is the number of points of $\pts$ in $H_{(x,z)}$.
\item The number of points of $\pts$ that have the same real part as $q$ is at least $2^{\beta}$ and smaller than $2^{\beta + 1}$.
\item In the imaginary plane $H_{(x, z)}$, the number of points of $\pts$ that lie on the copy of $\ell(p, q)$ is at least $2^\gamma$ and smaller than $2^{\gamma + 1}$.
\item In the imaginary plane $H_{(x', z')}$, the number of points of $\pts$ that lie on the copy of $\ell(p, q)$ is at least $2^\delta$ and smaller than $2^{\delta + 1}$. 
\end{itemize}

There are $O(\log^4 n)$ possible values for the tuple $(\alpha, \beta, \gamma, \delta)$.
Each unit distance that is spanned by $\pts$ contributes to $I(\alpha,\beta,\gamma,\delta)$ for exactly one of these tuples. Moreover, $O(\log^2 n)$ of the tuples satisfy $\gamma = \delta = 0$.
Thus, the number of unit distances spanned by $\pts$ is at most $O(\log^4 n)$ times the maximum possible size of an $I(\alpha,\beta,\gamma,\delta)$ that does not satisfy $\gamma = \delta = 0$, plus $O(\log^2 n)$ times the maximum possible size of an $I(\alpha,\beta,\gamma,\delta)$ that does.

Since $\pts$ has multiplicity $n^\lambda$, we may assume that $2^\alpha \le n^\lambda$ and $2^\beta \le n^\lambda$.
By definition, we also have that $\gamma \leq \alpha$ and $\delta \leq \beta$.
We fix values of $\alpha, \beta, \gamma,\delta$ with these properties.
From now on, we consider only unit distances that contribute to this specific $I(\alpha, \beta, \gamma, \delta)$.
We partition the rest of the proof into five cases, according to the values of $\alpha, \beta, \gamma,\delta$.

\parag{Case 1:} $\alpha /2 \leq \gamma \leq \alpha$.

Suppose that the points $p$ and $q$ define a unit distance. There are $O(n2^{-\alpha})$ possible values for the real part $(x, z)$ of $p$.
Indeed, every such real part exhausts $\Theta(2^\alpha)$ of the $n$ points of $\pts$.
Fix one such real part $(x, z)$.
Recall that the copy of $\ell(p, q)$ in $H_{(x, z)}$ must be $\Theta(2^{\gamma})$-rich.
We apply Corollary \ref{co:DualST} to obtain an upper bound on the number of such lines.
Since $H_{(x, z)}$ contains fewer than $2^{\alpha+1}$ points and we are in the case of $\gamma \ge \alpha/2$, the corollary implies that the number of distinct lines that are candidates for $\ell(p, q)$ is $O(2^{\alpha - \gamma})$.

Fix a line $\ell \in \R^2$ with the above properties. Recall that $(x', z')$ must be such that $(x - x', z - z')$ is a vector of Euclidean norm 1 orthogonal to $\ell$. Thus, there are at most two possible values for $(x', z')$. In addition, since the copy of $\ell(p, q)$ in $H_{(x, z)}$ must be incident to the imaginary part $(y, w)$ of $p$, the remaining possibilities for $(y, w)$ are the imaginary parts of the points of $\pts$ lying on the copy of $\ell$ in $H_{(x, z)}$. By construction, there are $\Theta(2^\gamma)$ such imaginary parts.

Fix choices of $(x', z')$ and $(y, w)$ with the above properties. Because the copy of $\ell(p, q)$ in $H_{(x', z')}$ must be incident to the imaginary part $(y', w')$ of $q$, the remaining possibilities for $(y', w')$ are the points of $\pts$ lying on the copy of $\ell$ in $H_{(x', z')}$, of which there are $\Theta(2^\delta)$.

Combining the above implies that, in this case, we have
\[ I(\alpha, \beta, \gamma, \delta) =O(n2^{-\alpha}) \cdot O(2^{\alpha - \gamma}) \cdot 2 \cdot \Theta(2^{\gamma}) \cdot \Theta(2^{\delta}) = O\left( n2^{\delta} \right) = O\left( n2^{\beta}\right) =O\left( n^{1 + \lambda}\right).\]

\parag{Case 2:} $\beta /2 \leq \delta \leq \beta$.

This case is symmetric to Case 1.

\parag{Case 3:} $1 \leq \gamma < \alpha /2$ and $\delta \leq \gamma$.

We follow the analysis of Case 1.
In that case, we relied on the assumption $\gamma \geq \alpha/2$ only when applying Corollary \ref{co:DualST}.
In the current case, the corollary implies that the number of $2^\gamma$-rich lines in an imaginary plane $H_{(x,z)}$ is $O(2^{2\alpha - 3\gamma})$.
We do not change any other part of the analysis of Case 1.
This leads to
\begin{align*}
I(\alpha, \beta, \gamma, \delta) =O(n2^{-\alpha}) \cdot O(2^{2\alpha - 3\gamma}) \cdot 2 \cdot \Theta(2^{\gamma}) \cdot \Theta(2^{\delta}) &= O\left( n2^{\alpha+\delta-2\gamma}\right) \\[2mm]
&=O\left(n2^\alpha\right) = O\left(n^{1+\lambda}\right) .
\end{align*}
Here we used the assumption $\delta \leq \gamma$ to conclude that $\delta-2\gamma<0$.

\parag{Case 4:} $1 \leq \delta < \frac \beta 2$ and $\gamma \leq \delta$.

This case is symmetric to Case 3.

\parag{Case 5:} $\gamma = \delta = 0$.

Note that the case of $\gamma = \delta = 0$ is the only one not covered by Cases 1--4.
Indeed, by Cases 1 and 2, we may assume that $\gamma<\alpha/2$ and $\delta<\beta/2$.
If at least one of $\gamma$ and $\delta$ is positive, then this is covered by Cases 3 and 4.

By repeating the argument at the beginning of Case 1, we get that there are $O(n2^{-\alpha})$ possible values for the real part $(x, z)$ of $p$.
By a symmetric argument, there are $O(n2^{- \beta})$ possible values for $(x', z')$.
Note that $(x,z)$ and $(x',z')$ must span a unit distance in $\RR^2$, and by Theorem \ref{th:Szek93}, the number of such pairs is
\[O\left(n^{4/3 }2^{- 2(\alpha+\beta)/3} + n2^{ - \alpha} + n2^{ - \beta}\right)\]
(We apply the theorem with the possible values for $(x, z)$ as the set of points and with a unit circle centered at each possible value of $(x', z')$).
Fix a pair $(x, z)$ and $(x', z')$ with the above properties.
The direction of the line $\ell(p, q)$ is then uniquely determined, since it must be orthogonal to $(x - x', z - z')$.
By the assumptions on $\gamma$ and $\delta$, $\ell(p, q)$ must also be such that its copies in $H_{(x, z)}$ and $H_{(x', z')}$ both contain exactly one point of $\pts$.
Recall that $H_{(x, z)}$ contains $\Theta(2^\alpha)$ points and that $H_{(x', z')}$ contains $\Theta(2^\beta)$ points.
Since parallel lines are disjoint, the number of possibilities for $\ell(p, q)$ is then $O(\min\{2^\alpha,2^\beta\})$. Fixing such an $\ell$, there is then only one possibility for each of $(y, w)$ and $(y', w')$.

Without loss of generality, we assume that $\alpha \leq \beta$.
Combining the above implies that
\[ I(\alpha, \beta, \gamma, \delta) =O\left(n^{4/3 }2^{- 2(\alpha+\beta)/3} + n2^{ - \alpha}\right) \cdot O\left(2^\alpha\right) \cdot 1 = O\left(n^{4/3 }2^{(\alpha- 2\beta)/3} + n\right)= O\left(n^{4/3}\right). \]

Combining the five cases, we conclude that $I(\alpha, \beta, \gamma, \delta) = O\left( n^{1 + \lambda}\right)$ if $\gamma = \delta = 0$ does not hold, and $I(\alpha, \beta, \gamma, \delta) = O\left( n^{4/3} \right)$ if it does.
This completes the proof of the theorem.
\end{proof}

\textit{Remark}. Recall that the secondary multiplicity of $\pts$ is the largest $k$ such that there exists a line $\ell$ in some imaginary plane $H_p$ that contains $k$ points of $\pts$.
The secondary multiplicity of $\pts$ cannot be larger than the multiplicity of $\pts$, but it can be significantly smaller.
Most of the proof of Theorem \ref{th:UnitDual} is still valid when we replace the multiplicity with the secondary multiplicity $n^\nu$.
The only cases for which we cannot easily adapt the above arguments are 3 and 4. Moreover, the argument in case 5 can be extended to handle all instances of cases 3 and 4 for which $\alpha \leq \beta$ and $\delta \leq 2\beta/ 3 - \alpha/3$, or $\beta \leq \alpha$ and $\gamma \leq 2\alpha/ 3 - \beta/3$ (at the small cost of an additional $\log^2 n$ factor on the $n^{4 / 3}$ term in our bound), leaving a narrow range of bad values.
It would be interesting to know whether it is possible to handle the remaining cases and obtain the improved bound $O((n^{1 + \nu} + n^{4/3}) \cdot \log^4 n)$.

In $\RR^2$, the unit distance problem is equivalent to finding the maximum number of times a set of $n$ points can span a distance $\delta$, for any $\delta > 0$.
Indeed, we can perform a uniform scaling of $\RR^2$, which establishes a bijection between distances 1 and distances $\delta$.
We now show that the same sort of argument works in $\DD^2$.

Let $\delta = [\delta_1,\delta_2]\in \DD$ be a valid distance between points in $\DD^2$ (i.e., $\delta$ is in the image of the metric on $\DD^2$).
By considering the distance definition in \eqref{eq:distDual}, we note that this is to say that $\delta_1\ge 0$, and if $\delta_1 = 0$, then $\delta_2 = 0$.
Recall that two points in $\DD^2$ span the distance $[0, 0]$ if and only if they have the same real part, so the repeated distance problem is not very interesting for that distance. But for any other distance, that is, any dual number of the form $[\delta_1, \delta_2]$ with $\delta_1 > 0$, the repeated distance problem for that distance is indeed equivalent to the repeated distance problem for the distance $[1, 0]$, i.e., the unit distance problem.
\begin{claim}
For any $\delta = [\delta_1, \delta_2] \in \DD$ with $\delta_1 > 0$, the unit distance problem in $\DD^2$ is equivalent to finding the maximum number of times that an $n$-point set can span the distance $\delta \in \DD$. In particular, Theorem \ref{th:UnitDual} holds for all non-zero repeated distances.
\end{claim}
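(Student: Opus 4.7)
The plan is to exhibit an invertible map $T: \DD^2 \to \DD^2$ that preserves multiplicity and sends pairs at distance $\delta$ to pairs at unit distance. Such a $T$ yields a bijection between $n$-point sets in $\DD^2$ (preserving the multiplicity parameter $n^\lambda$), and within each set it gives a bijection between pairs at distance $\delta$ and pairs at distance $[1,0]$. Once $T$ is in hand, the equivalence of the two extremal quantities is immediate, and Theorem \ref{th:UnitDual} transfers from the unit distance to the distance $\delta$.

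The natural candidate is coordinatewise dual-number scaling by some $\mu = \mu_1 + \mu_2 \eps \in \DD$ with $\mu_1 \ne 0$: define $T(z_1, z_2) = (\mu z_1, \mu z_2)$, which in the bracket notation reads $T([x,y,z,w]) = [\mu_1 x,\ \mu_1 y + \mu_2 x,\ \mu_1 z,\ \mu_1 w + \mu_2 z]$. A direct calculation from \eqref{eq:distDual} gives
\[
\rho_{\DD^2}(T(p), T(q)) = \bigl[\mu_1^2 A,\ \mu_1^2 B + 2\mu_1 \mu_2 A\bigr],
\]
where $[A, B] = \rho_{\DD^2}(p, q)$. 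To make this equal $[1, 0]$ whenever $[A, B] = \delta = [\delta_1, \delta_2]$, I would solve the two resulting scalar equations, obtaining $\mu_1 = 1/\sqrt{\delta_1}$ and $\mu_2 = -\delta_2/(2\delta_1^{3/2})$. The hypothesis $\delta_1 > 0$ is used here exactly once but decisively: it guarantees that both quantities are well defined real numbers and that $\mu_1 \ne 0$, so $\mu$ is a unit in $\DD$ and $T$ is a bijection (its inverse is coordinatewise scaling by $\mu^{-1}$).

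Finally, I would verify that $T$ preserves multiplicity. The real part of $T(p)$ is $(\mu_1 x, \mu_1 z)$, so on real parts $T$ acts as multiplication by the nonzero scalar $\mu_1$, which is a bijection of $\R^2$. Hence two points of $\pts$ share a real part if and only if their images under $T$ do, so $T$ preserves both the cardinality and the multiplicity of any point set. Combining, the maximum number of $\delta$-distances determined by an $n$-point set of multiplicity $n^\lambda$ in $\DD^2$ equals the analogous maximum for the unit distance, and Theorem \ref{th:UnitDual} applies verbatim with unit distances replaced by $\delta$-distances.

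I do not anticipate a substantive obstacle: the only real content is the short algebraic computation above that produces the required $\mu$, and the only hypothesis used is $\delta_1 > 0$, which is precisely the assumption under which $\delta$ is a nonzero distance in $\DD$.
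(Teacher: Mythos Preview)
Your argument is correct. The computation $\rho_{\DD^2}(T(p),T(q))=\mu^2\,\rho_{\DD^2}(p,q)$ is immediate from the fact that $T$ is coordinatewise multiplication by $\mu$, and since $\delta_1>0$ the element $\delta$ is a unit of $\DD$ whose inverse has positive real part and hence admits a square root $\mu$; your explicit formulas for $\mu_1,\mu_2$ are right. The observation that on real parts $T$ is scaling by the nonzero real scalar $\mu_1$ correctly shows that multiplicity (and even secondary multiplicity) is preserved, and invertibility of $T$ gives both directions of the equivalence.

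Your route differs from the paper's. The paper does not use a single $\DD$-linear scaling; instead it composes three real-linear coordinate changes: a pure imaginary scaling $[x,y,z,w]\mapsto[x,sy,z,sw]$ (which takes $[d_1,d_2]$ to $[d_1,sd_2]$), an anisotropic scaling $[x,y,z,w]\mapsto[x\sqrt{s},y/\sqrt{s},z\sqrt{s},w/\sqrt{s}]$ (which takes $[d_1,d_2]$ to $[sd_1,d_2]$), and a shear $[x,y,z,w]\mapsto[x,y-x,z,w-z]$ to pass from $[1,2]$ to $[1,0]$. The first two of these are \emph{not} multiplication by a dual number, so the paper's argument is coordinate-based rather than algebraic. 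Your approach is more conceptual and shorter: it identifies the transformation directly as scaling by a unit of $\DD$, and makes transparent why the hypothesis $\delta_1>0$ is exactly the condition needed (it is precisely the condition that $\delta$ be a square of a unit in $\DD$). The paper's decomposition, on the other hand, isolates which coordinate changes affect which component of the distance, which could be handy in other contexts; but for the claim at hand your single-step argument is the cleaner one.
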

\begin{proof}
Let $s\in \RR$ be non-zero.
A scaling of just the imaginary coordinates $[x, y, z, w] \mapsto [x, s y, z, s w]$ changes every distance $[d_1, d_2]$ to $[d_1, s d_2]$.
This is not difficult to see when considering the distance definition in \eqref{eq:distDual}.
Thus, the repeated distance problem is equivalent for any pair of distances of the form $[d_1, d_2]$ and $[d_1, s d_2]$.

For positive $s\in \RR$, consider the change of coordinates
\[ [x, y, z, w] \mapsto [x\cdot \sqrt{s}, y/\sqrt{s}, z\cdot \sqrt{s}, w/\sqrt{s}]. \]
By the distance definition in \eqref{eq:distDual}, this transformation changes every distance $[d_1, d_2]$ to $[sd_1, d_2]$.
Thus, the problem is also equivalent for any pair of distances of the form $[d_1, d_2]$ and $[sd_1, d_2]$.

The above allows us to show equivalence between most repeated distance problems.
By the scalings presented in the preceding paragraphs, it suffices to show that the cases of $[1,2]$ and $[1,0]$ are equivalent.

Consider two points $p=[x',y',z',w']$ and $q=[x'',y'',z'',w'']$ that span the distance $[1, 2]$.
Consider the change of coordinates
\[ [x, y, z, w] \mapsto [x, y-x, z, w- z]. \]
Since $(x' - x'')^2 + (z' - z'')^2 = 1$, we have that
\begin{align*}
&2((x'- x'')(y'- y'') + (z' - z'')(w' - w'')) = 2 \quad \text{ if and only if} \\
&(x'- x'')(y'- y'') + (z' - z'')(w' - w'') = (x' - x'')^2 + (z' - z'')^2 \quad \text{ if and only if} \\
&(x' - x'')((y' - x') - (y'' - x'')) + (z' - z'')((w' - z') - (w'' - z'')) = 0.
\end{align*}
Note that after applying the above transformation, $p \mapsto [x', y'-x', z', w'- z']$ and $q \mapsto [x'', y''-x'', z'', w''- z'']$, and these points span the distance $[1, 0]$. By reversing the above chain of equations, we can see that the converse also holds.
That is, $p$ and $q$ span the distance $[1, 2]$ before the transformation if and only if they span the distance $[1,0]$ after the transformation.
We conclude that the repeated distance problem is equivalent for the distances $[1, 2]$ and $[1, 0]$, which completes the proof.

Note that in the above argument, all of the transformations we used preserve the multiplicity (and the secondary multiplicity) of a point set, so we have shown that the repeated distance problem in $\DD^2$ is still equivalent for each non-zero distance $\delta$ even when we allow bounds that are a function of the multiplicity of the set. That is, if there is a set in $\DD^2$ of size $n$ that has multiplicity $n^{\lambda}$ and spans $m$ unit distances, then for any other non-zero distance $\delta$, we can use the above transformations to get another set of size $n$ with multiplicity $n^{\lambda}$ that spans $m$ copies of the distance $\delta$.
\end{proof}

We now move to study the distinct distances problem in $\DD^2$.
\vspace{2mm}

\noindent {\bf Theorem \ref{th:DistinctDual}.}
\emph{Let $\pts$ be a set of $n$ points in $\DD^2$ with secondary multiplicity $n^{\nu}$.
Assume that the points of $\pts$ do not all have the same real part.
Then}
\[ D(\pts) = \Omega \left( n^{1-\nu}\log^{-2} n \right). \]

\begin{proof}
For $0 \leq j \leq \log n$, let $\pts_j$ be the set of points $p\in \pts$ such that the number of points of $\pts$ having the same real part $(x, z)$ as $p$ is at least $2^j$ and smaller than $2^{j + 1}$.
By the pigeonhole principle, there exists $0 \leq j_0 \leq \log n$ such that $|\pts_{j_0}|=\Omega(n/\log n)$.
Fix one such $j_0$ and let $\pts_{\RR} \subseteq \RR^2$ be the set of distinct real parts of the points of $\pts_{j_0}$.
By definition,
\[|\pts_{\RR}| =\Omega\left( \frac{|P_{j_0}|}{2^{j_0}}\right) =\Omega\left( \frac{n}{2^{j_0} \log n}\right).\]

By Theorem \ref{th:GK15 2},
\begin{equation} \label{eq:DDrealDualThm}
D(\pts_{\RR}) = \Omega\left(\frac{n}{2^{j_0} \log n}\cdot\log^{-1}\left(\frac{n}{2^{j_0} \log n}\right)\right) = \Omega\left(\frac{n}{2^{j_0} \log^2 n}\right).
\end{equation}

We first consider the case of $|\pts_{\RR}|=1$, which can only happen when $2^{j_0} =\Omega(n/\log n)$.
In this case, all the points of $\pts_{j_0}$ have the same real part $(x, z)$.
In other words, $\pts_{j_0}$ is contained in a single imaginary plane $H_{(x,z)}$.

By an assumption of the theorem, we may fix a point $p=[x', y', z', w'] \in \pts$ such that $(x', z') \neq (x, z)$.
Consider a point $q=[x, y'', z, w'']$ from $H_{(x,z)}$.
The distance between $p$ and $q$ is
\begin{align}
&\left[\left(\Delta x\right)^2 + \left(\Delta z\right)^2, 2\left(\Delta x \Delta y + \Delta z \Delta w\right)\right] \nonumber \\
&\hspace{15mm}= \left[\left(x - x'\right)^2 + \left(z - z'\right)^2, 2\left(\left(x - x'\right)\left(y'' - y'\right) + \left(z - z'\right)\left(w'' - w'\right)\right)\right]. \label{eq:SecondDistLine}
\end{align}
The first coordinate of \eqref{eq:SecondDistLine} is the same for any choice of $q$ from $H_{(x,z)}$.
The second coordinate of \eqref{eq:SecondDistLine} varies according to the value of $y''(x - x')+w''( z - z')$.

For a fixed $d\in \RR$, consider the set of points $q = [x, y'', z, w'']$ on the imaginary plane $H_{(x,z)}$ that satisfy $y''(x - x')+w''( z - z')=d$.
Since $(x - x', z - z') \neq (0, 0)$, this set forms a line with a direction orthogonal to $(x - x',z - z')$.
By the assumption on the secondary multiplicity, such a line contains at most $n^\nu$ points of $\pts_{j_0}$.
Since $H_{(x,z)}$ contains all $\Omega(n/\log n)$ points of $\pts_{j_0}$, there are $\Omega(n^{1-\nu}/\log n)$ such lines that intersect $\pts_{j_0}$.
In other words, $D(\{p\},\pts_{j_0}) = \Omega \left( n^{1-\nu}\log^{-1} n \right)$, which concludes this case.

We now move to the case where $|\pts_{\RR}|>1$.
Let $\Delta$ be the set of nonzero distances spanned by $\pts_{\RR}$.
Note that, in this case, $|\Delta|\ge 1$.

Fix a distance $d \in \Delta$ and two points $(x_d, z_d), (x'_d, z'_d) \in \pts_{\RR}$ that span $d$.
Then, any points $[x_d, y, z_d, w] \in H_{(x_d, z_d)}$ and $[x'_d, y', z'_d, w'] \in H_{(x'_d, z'_d)}$ span the distance
\begin{equation} \label{eq:dotProdDistinct}
\left[\left(x_d - x'_d\right)^2 + \left(z_d - z'_d\right)^2, 2((x_d - x'_d)(y - y') + (z_d - z_d')(w - w'))\right].
\end{equation}

The first coordinate of \eqref{eq:dotProdDistinct} is $d^2 \neq 0$.
Consider the vector $\vec{v} = (x_d - x_d', z_d - z_d')$.
The second coordinate of \eqref{eq:dotProdDistinct} depends on the expression
\begin{equation} \label{eq:DotProductDiff}
\vec{v} \cdot (y, w) - \vec{v} \cdot (y', w').
\end{equation}

Consider $(y_d, w_d)$ and $(y'_d, w'_d)$ as points in $\RR^2$.
Then, up to a constant factor, $\vec{v} \cdot (y, w)$ can be thought of as the scalar projection of $(y, w)$ along the $\vec{v}$-axis.
That is, the expression \eqref{eq:DotProductDiff} depends only on which lines orthogonal to $\vec{v}$ contain the two points.
In particular, the number of distinct values of \eqref{eq:DotProductDiff} is at least the number of such lines in $H_{(x_d, z_d)}$ that contain at least one point of $\pts_{j_0}$.
By assumption, each such line contains at most $n^\nu$ points.
Since $H_{(x_d, z_d)}$ contains $\Theta(2^{j_0})$ points, there are $\Omega(2^{j_0}/n^\nu)$ such lines.
That is, the expression \eqref{eq:DotProductDiff} attains $\Omega(2^{j_0}/n^\nu)$ distinct values.
This is the number of distinct distances between the points of $\pts$ with real part $(x_d, z_d)$ and the points of $\pts$ with real part $(x'_d, z'_d)$.

In the preceding paragraphs, we considered pairs of points of $\pts$ with fixed real parts spanning a distance $d \in \Delta$.
We concluded that such pairs span $\Omega(2^{j_0}/n^\nu)$ distinct distances.
Recall that the first coordinate of \eqref{eq:dotProdDistinct} is $d^2$.
Therefore, for distinct choices of $d$, this argument produces disjoint sets of distances.
To obtain a lower bound for $D(\pts_{j_0})$, we may thus sum up the number of distinct distances over every $d \in \Delta$.
Combining this with \eqref{eq:DDrealDualThm} gives
\[ D(\pts) \ge D(\pts_{j_0}) = \Omega\left(\frac{n}{2^{j_0}\log^2 n} \cdot \frac{2^{j_0}}{n^{\nu}}\right) = \Omega\left(\frac{n^{1 - \nu}}{\log^2 n}\right). \]
\end{proof}

\section{Double numbers} \label{sec:Double}

We now prove our results in the double plane $\WW^2$.
We restate each result before proving it.

Recall that the notation $\langle d_1,d_2 \rangle$ was defined in \eqref{eq:CoordinateChangeNumber}.
Consider a distance $d = \langle d_1, d_2 \rangle \neq \langle 0, 0 \rangle$.
If both $d_1$ and $d_2$ are positive, then we say that $d$ is a \emph{type A distance}.
If $d_1 = 0$ and $d_2 > 0$, we say that $d$ is a \emph{type B distance}. And if $d_1 > 0$, $d_2 = 0$, we say that $d$ is a \emph{type C distance}.

\vspace{2mm}

\noindent {\bf Theorem \ref{th:UnitDouble}.}
\emph{Let $\pts$ be a set of $n$ points in $\WW^2$. Let $d \in \WW$ be a non-zero distance. \\
(a) If $d$ is of type A and $\pts$ has minimal multiplicity $n^{\lambda}$ (where $0\le \lambda \le 1$), then the number of times $d$ is spanned by $\pts$ is
\[ O \left( \left(n^{4/3 + \lambda/3} + n^{1 + \lambda} \right) \log^2 n\right). \]
(b) If $d$ is of type B and $\pts$ has real multiplicity $n^{\lambda}$, then the number of times $d$ is spanned by $\pts$ is $\displaystyle O \left( n^{1 + \lambda/3}\right)$. \\
(c) If $d$ is of type C and $\pts$ has imaginary multiplicity $n^{\lambda}$, then the number of times $d$ is spanned by $\pts$ is $\displaystyle O \left( n^{1 + \lambda/3}\right)$.}
\begin{proof}
We first show that the repeated distance problem is equivalent for any two distances of the same type.

For a positive $s\in \RR$, consider the transformation $\langle x, y, z, w \rangle \mapsto \langle x\sqrt{s}, y, z\sqrt{s}, w \rangle$.
By observing the distance definition \eqref{eq:DoubleDist}, we note that this bijection takes distances of the form $\langle d_1, d_2 \rangle$ to $\langle s\cdot d_1, d_2 \rangle$.
Similarly, the bijection $\langle x, y, z, w \rangle \mapsto \langle x, y\sqrt{s}, z, w\sqrt{s} \rangle$ takes distances of the form $\langle d_1, d_2 \rangle$ to $\langle d_1, s\cdot d_2 \rangle$.
This implies that the maximum number of repeated distances is the same for any two distances of the same type.

Moreover, we note that because the above transformations do not alter any of the three multiplicities of a point set, the maximum number of copies of a distance in a set with a given cardinality \emph{and} a given minimal multiplicity is also the same for any two type A distances, and similarly for the other two types.
\parag{The type A case.}
By the above, it suffices to prove the bound for the type A distance $d=\langle 1, 1 \rangle$. Recall that the minimal multiplicity was defined as the minimum of the real and imaginary multiplicities. Thus, since the condition $\rho_{\WW^2}(p, q) = d$ is symmetric in the real and imaginary parts of $p$ and $q$, we may assume without loss of generality that $\leq n^{\lambda}$ points of $\pts$ have the same real part.

Let $t$ be the number of pairs of $\pts^2$ that span the distance $d$.
For $0 \leq j \leq \log n$, let $\pts_j$ be the set of points $p \in \pts$ such that the number of points in $\pts$ having the same real part $(x, z)$ as $p$ is at least $2^j$ and smaller than $2^{j+1}$.

By the pigeonhole principle, there exist $0 \leq j_0, j_1 \leq \log n$ such that the number of times $d$ is spanned by pairs of points in $\pts_{j_0}\times \pts_{j_1}$ is $\Omega(t/\log^2n)$.
We will show that this number is also $O\left(n^{4/3 + \lambda/3} + n^{1 + \lambda}\right)$.
Combining these two bounds will then complete the proof of part (a) of the theorem.

Since the distance function defined in \eqref{eq:DoubleDist} is symmetric, we may assume that $j_0 \leq j_1$.
By our assumptions, we have that $2^{j_0} \le 2^{j_1} \le n^{\lambda}$.

Let $\pts_{\RR,0}$ be the set of real parts of the points of $\pts_{j_0}$.
Let $\pts_{\RR,1}$ be the set of real parts of the points of $\pts_{j_1}$.
By the definition of the sets $\pts_{j_0}$ and $\pts_{j_1}$, we have that $|\pts_{\RR,0}| =O\left(n2^{-j_0}\right)$ and $|\pts_{\RR,1}| = O\left( n2^{-j_1}\right)$.
By Theorem \ref{th:Szek93}, the number of unit distances in $\pts_{\RR,0}\times\pts_{\RR,1}$ is
\begin{equation} \label{eq:UnitDoubleInRePlane}
O\left(n^{4/3}2^{-2(j_0+ j_1)/3} + n2^{-j_0}\right).
\end{equation}

Consider two points $(x', z') \in \pts_{\RR,0}$ and $(x'', z'') \in \pts_{\RR,1}$ that span a unit distance in $\R^2$.
Let $S \subset \R^2$ be the set of points $(y, w) \in \R^2$ such that $\langle x', y, z', w \rangle$ is in $\pts_{j_0}$.
Let $T \subset \R^2$ be the set of points $(y, w) \in \R^2$ such that $\langle x'', y, z'', w \rangle$ is in $\pts_{j_1}$.
By definition, $|S| =\Theta(2^{j_0})$ and $|T| =\Theta(2^{j_1})$.
By Theorem \ref{th:Szek93}, the number of unit distances in $S\times T$ is
\begin{equation} \label{eq:UnitDoubleInImPlane}
O\left(2^{2(j_0+j_1)/3} + 2^{j_1}\right).
\end{equation}
In other words, this is an upper bound on the number of pairs $(p,q)\in \pts_{j_0}\times\pts_{j_1}$ such the real part of $p$ is $(x',z')$, the real part of $q$ is $(x'',z'')$, and the distance between $p$ and $q$ is $\langle 1, 1 \rangle$.

Recall the bound in \eqref{eq:UnitDoubleInRePlane} on the number of pairs of $\pts_{\RR,0}\times \pts_{\RR,1}$ that span a unit distance.
By summing \eqref{eq:UnitDoubleInImPlane} over each of these pairs, we obtain that the number of pairs in $\pts_{j_0}\times \pts_{j_1}$ that span the distance $\langle 1, 1 \rangle$ is
\begin{align*}
&O\left(n^{4/3}2^{-2(j_0+ j_1)/3} + n2^{-j_0}\right) \cdot O\left(2^{2(j_0+j_1)/3} + 2^{j_1}\right) \\
&\hspace{10mm}=O\left(n^{4/3} + n^{4/3}2^{(j_1-2j_0)/3} + n2^{(2j_1-j_0)/3} + n2^{j_1 - j_0}\right) =O\left(n^{4/3 +1/3 \cdot\lambda} + n^{1 + \lambda}\right).
\end{align*}
This completes the proof of part (a) of the theorem.

\parag{The type B case.}
By the above, it suffices to prove the bound for the type B distance $\langle 0, 1 \rangle$.
Note that this distance is spanned only by points having the same real part.
Thus, it suffices to separately consider every imaginary plane $H_{(x,z)}$.
Set $\pts_{(x,z)} = \pts \cap H_{(x,z)}$.
By assumption, for every real part $(x,z)$, we have that $|\pts_{(x, z)}|\le n^{\lambda}$.
We also have that $\sum |\pts_{(x, z)}| = n$.

By Theorem \ref{th:Szek93}, the number of $\langle 0, 1 \rangle$-distances in $H_{(x,z)}$ is $O(|\pts_{(x, z)}|^{4/3})$. We conclude that the number of pairs of $\pts^2$ that span the distance $\langle 0, 1 \rangle$ is
\[\sum_{(x,z)} O\left(|\pts_{(x, z)}|^{4/3}\right) =O\left( n^{\lambda/3} \sum_{(x,z)} |\pts_{(x, z)}|\right) = O\left(n^{1 + \lambda/3}\right).\]

\parag{The type C case.}
This follows from the type B case, by symmetry.
\end{proof}

We now consider several constructions that provide lower bounds for the repeated distance problem in $\WW^2$.

\begin{claim}
(a) Let $d$ be a type A distance. There exists a set $\pts\subset\WW^2$ of size $n$ such that the number of pairs in $\pts^2$ that span $d$ is $\Theta(n^2)$.\\[2mm]
(b) Assuming that the unit distance bound of Theorem \ref{th:Szek93} is sharp when $m=n^{1-\lambda}$, the bound of part (a) of Theorem \ref{th:UnitDouble} is also sharp for sets of size $n$ and minimal multiplicity $n^{\lambda}$, up to polylogarithmic factors.\\[2mm]
(c) Assuming that the unit distance bound of Theorem \ref{th:Szek93} is sharp when $m = n$, the bound of part (b) (part (c)) of Theorem \ref{th:UnitDouble} is also sharp for sets of size $n$ and real (imaginary) multiplicity $n^{\lambda}$.
\end{claim}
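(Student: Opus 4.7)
The plan is to exploit the decomposition in \eqref{eq:DoubleDist}, which writes $\rho_{\WW^2}(p,q)$ as a pair of squared Euclidean distances on the real projection $\pi_1$ and the imaginary projection $\pi_2$ of $\WW^2$ onto $\R^2$, so that constructing a point set with many copies of a given distance in $\WW^2$ reduces to two coupled Euclidean problems in $\R^2$. By the scalings at the start of the proof of Theorem \ref{th:UnitDouble} it suffices to work with $d=\langle 1,1\rangle$ for parts (a)--(b) and with $d=\langle 0,1\rangle$ (resp.\ $\langle 1,0\rangle$) for part (c).

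For part (a), I will use a Lenz--style construction. Let $C\subset\R^2$ be any set of $n/2$ distinct points on the unit circle and take
\[
\pts=\{\langle 0,y,0,w\rangle:(y,w)\in C\}\cup\{\langle x,0,z,0\rangle:(x,z)\in C\}.
\]
The two halves are disjoint, so $|\pts|=n$. For any cross pair $(p_1,p_2)$, one has $|\pi_1(p_1)-\pi_1(p_2)|^2=|(x,z)|^2=1$ and $|\pi_2(p_1)-\pi_2(p_2)|^2=|(y,w)|^2=1$, so $(p_1,p_2)$ spans $\langle 1,1\rangle$ by \eqref{eq:DoubleDist}. This gives $(n/2)^2=\Theta(n^2)$ pairs at $d$.

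For part (b) I use two separate constructions, one for each regime of the bound $n^{4/3+\lambda/3}+n^{1+\lambda}$. When $\lambda\ge 1/2$ and the term $n^{1+\lambda}$ dominates, I take $\lceil n^{1-\lambda}/2\rceil$ shifted, generically--centred copies of the part (a) construction, each scaled to $2n^\lambda$ points. Each copy internally produces $(n^\lambda)^2$ pairs at $d$, both real and imaginary multiplicities equal $n^\lambda$, and generic placement of the centres guarantees that cross--copy pairs at $d$ contribute only $O(n^{2-2\lambda})=o(n^{1+\lambda})$. When $\lambda\le 1/2$ and the term $n^{4/3+\lambda/3}$ dominates, I combine a bipartite Szemer\'edi--Trotter--sharp structure on the real parts with a Lenz--type degeneration in the imaginary plane. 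Using the hypothesis that Theorem \ref{th:Szek93} is sharp at $m=n^{1-\lambda}$, choose nested--grid sets $A,A'\subset\R^2$ with $|A|=n/2$, $|A'|=n^{1-\lambda}/2$ that realise $\Theta(|A|^{2/3}|A'|^{2/3})=\Theta(n^{(4-2\lambda)/3})$ bipartite unit--distance pairs, and let $C\subset\R^2$ be any $n^\lambda$ points on the unit circle. Set
\[
\pts=\{\langle a_1,0,a_2,0\rangle:(a_1,a_2)\in A\}\cup\{\langle a'_1,c_1,a'_2,c_2\rangle:(a'_1,a'_2)\in A',\ (c_1,c_2)\in C\}.
\]
Then $|\pts|=n$ and the real multiplicity equals $n^\lambda$; every triple $(a\in A,a'\in A',c\in C)$ with $|a-a'|=1$ yields a cross pair at real--part distance $1$ and imaginary--part distance $|c|=1$, spanning $\langle 1,1\rangle$. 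This gives $\Theta(n^{(4-2\lambda)/3}\cdot n^\lambda)=\Theta(n^{4/3+\lambda/3})$ pairs, and a short calculation shows that within--$\pts_2$ pairs contribute only $O(n^{4/3-\lambda/3})$.

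For part (c) with $d=\langle 0,1\rangle$ (type B), the condition $d_1=0$ forces every matching pair to share a real part, so the count reduces to counting imaginary unit distances inside each real--part class. I will partition the $n$ points into $n^{1-\lambda}$ classes of $n^\lambda$ points, assign each class a common real part, and place in each class an imaginary configuration that is Szemer\'edi--Trotter--sharp at size $n^\lambda$ (obtained by taking a suitable sub-grid of the hypothesised $m=n$ sharp construction). Each class then contributes $\Theta(n^{4\lambda/3})$ pairs at $\langle 0,1\rangle$, totalling $\Theta(n^{1+\lambda/3})$ with real multiplicity exactly $n^\lambda$; the case $d=\langle 1,0\rangle$ is the symmetric construction with the roles of the real and imaginary parts swapped. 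The main obstacle will be the small--$\lambda$ case of part (b): one must simultaneously realise the bipartite Szemer\'edi--Trotter sharpness between $A$ and $A'$ and the circle degeneration contributing the factor $n^\lambda$, while keeping both the cardinality and the multiplicity on target. The standard bipartite nested--grid construction, built from the hypothesised sharp set, should suffice.
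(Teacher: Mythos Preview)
Your proposal is correct and follows essentially the same approach as the paper: the Lenz-style construction for (a), the product of a bipartite $\R^2$ unit-distance-sharp configuration with a circle for (b), and the partition into $n^{1-\lambda}$ real-part classes each carrying an $n^\lambda$-point sharp unit-distance set for (c). The only notable difference is in (b): the paper uses a single construction that handles both regimes simultaneously, since the assumed bipartite sharpness already gives $\Theta(n^{(4-2\lambda)/3}+n)$ pairs and the $+n$ term automatically supplies the $n^{1+\lambda}$ contribution after multiplying by $n^\lambda$; your separate $\lambda\ge 1/2$ construction via shifted Lenz copies is valid but unnecessary, and your concerns about cross-copy and within-$\pts_2$ contributions are irrelevant since only a lower bound is needed.
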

\begin{proof}
(a) We prove the claim for $d=\langle 1, 1 \rangle$.
By the discussion at the beginning of the proof of Theorem \ref{th:UnitDouble}, this extends to any type $A$ distance.

We adapt Lenz's classical unit distance construction (see \cite{erd46}).
Let $C\subset \RR^2$ be the unit circle centered at the origin.
Let $\pts_1$ be a set of $n$ points of the form $\langle x,0,z, 0 \rangle$, such that $(x,z)\in \RR^2$ is incident to $C$.
Let $\pts_2$ be a set of $n$ points of the form $\langle 0,y,0,w \rangle$, such that $(y,w)\in \RR^2$ is incident to $C$.
Set $\pts = \pts_1 \cup \pts_2$.
Then the distance between any pair of points in $\pts_1\times \pts_2$ is
\[\langle x^2 + z^2, y^2 + w^2 \rangle = \langle 1, 1 \rangle.\]
The claim follows since the cardinality of the set is $\Theta(n)$ and there are $\Theta(n^2)$ pairs in $\pts_1\times \pts_2$.

(b) It again suffices to prove the claim for the distance $\langle 1, 1 \rangle$.
By the assumption, there exist sets $\pts_1,\pts_2 \subset \RR^2$ such that $|\pts_1|=n$, $|\pts_2|=n^{1-\lambda}$, and the number of unit distances in $\pts_1\times\pts_2$ is $\Theta(n^{(4-2\lambda)/3}+n)$.
Let $C\subset \RR^2$ be the unit circle centered at the origin and let $\pts_C$ be a set of $n^\lambda$ arbitrary points incident to $C$.

Let $\pts_1'$ be the set of points $\langle x, 0, z, 0 \rangle$ such that $(x,z)\in \pts_1$.
By definition, $\pts'_1$ is a set of $n$ points with real multiplicity one.
Let $\pts_2'$ be the set of points $\langle x, y, z, w \rangle$ such that $(x,z)\in \pts_2$ and $(y,w)\in\pts_C$.
By definition, $\pts'_2$ is a set of $n$ points with real multiplicity $n^\lambda$.
We set $\pts = \pts'_1 \cup \pts'_2$.
We see that $|\pts|=\Theta(n)$ and the real multiplicity of $\pts$ is $\Theta(n^\lambda)$, which means that the minimal multiplicity is $O(n^\lambda)$ (in fact, it is $\Theta(n^\lambda)$, because the imaginary multiplicity is $\Theta(n)$).
The number of pairs of $\pts'_1 \cup \pts'_2$ that span the distance $\langle 1, 1 \rangle$ is
\[ \Theta(n^{(4-2\lambda)/3}+n) \cdot \Theta(n^{\lambda}) = \Theta\left(n^{4/3+\lambda/3}+n^{1+\lambda}\right).\]

(c) We prove the claim for part (b) of Theorem \ref{th:UnitDouble}, for the distance $\langle 0, 1 \rangle$.
By the discussion at the beginning of the proof of Theorem \ref{th:UnitDouble}, this extends to any type B distance. 
By the assumption, we may choose a set $\pts_{\RR} \subset \RR^2$ of $n^\lambda$ points that determines $\Theta(n^{4\lambda/3})$ unit distances.
In addition, define $\pts'_{\RR} \subset \RR^2$ to be an arbitrary set of $n^{1-\lambda}$ points.

Let $\pts$ be the set of points $\langle x, y, z, w \rangle$ such that $(x,z)\in \pts'_{\RR}$ and $(y,w)\in \pts_{\RR}$.
Then $\pts$ is a set of $n$ points with real multiplicity $n^\lambda$.
For a pair of $\pts^2$ to span the distance $\langle 0, 1 \rangle$, the two points must have the same real part and their imaginary parts must define a unit distance in $\RR^2$.
Thus, the number of pairs of $\pts^2$ that span $\langle 0, 1 \rangle$ is
\[ \Theta\left(n^{1-\lambda}\right)\cdot \Theta\left(n^{4\lambda/3}\right) = \Theta\left(n^{1+\lambda/3}\right). \]

By symmetry, this also proves the desired sharpness result for part (c) of Theorem \ref{th:UnitDouble}.
\end{proof}

We now move to the distinct distances problem in $\WW^2$.
\vspace{2mm}

\noindent {\bf Theorem \ref{th:DistinctDouble}}
\emph{Every set of $n$ points in $\WW^2$ determines $\Omega \left( \frac{n}{\log^3 n} \right)$ distinct distances.}

\begin{proof}
We repeat several parts of the proof of Theorem \ref{th:DistinctDual}.
For $0 \leq j \leq \log n$, let $\pts_j$ be the set of points $p\in \pts$ such that the number of points of $\pts$ having the same real part $(x, z)$ as $p$ is at least $2^j$ and smaller than $2^{j + 1}$.
By the pigeonhole principle, there exists $0 \leq j_0 \leq \log n$ such that $|\pts_{j_0}|=\Omega(n/\log n)$.
Fix one such $j_0$ and let $\pts_{\RR} \subseteq \RR^2$ be the set of distinct real parts of the points of $\pts_{j_0}$.
By definition,
\[|\pts_{\RR}| =\Omega\left( \frac{|P_{j_0}|}{2^{j_0}}\right) =\Omega\left( \frac{n}{2^{j_0} \log n}\right).\]

By Theorem \ref{th:GK15 2},
\begin{equation} \label{eq:DDrealDoubleThm}
D(\pts_{\RR}) = \Omega\left(\frac{n}{2^{j_0} \log n}\cdot\log^{-1}\left(\frac{n}{2^{j_0} \log n}\right)\right) = \Omega\left(\frac{n}{2^{j_0} \log^2 n}\right).
\end{equation}

Let $\Delta$ be the set of distances spanned by $\pts_{\RR}$ (note that $\Delta$ may consist only of 0, if all points of $\pts_{j_0}$ have the same real part).
Fix a distance $d \in \Delta$ and a pair of points $(x_d, z_d), (x'_d, z'_d) \in \pts_{\RR}$ that span $d$.
Let $S \subset \R^2$ be the set of points $(y, w) \in \R^2$ such that $\langle x_d, y, z_d, w \rangle $ is in $\pts_{j_0}$.
Let $T \subset \R^2$ be the set of points $(y, w) \in \R^2$ such that $\langle x'_d, y, z'_d, w \rangle $ is in $\pts_{j_0}$.

By definition, we have that $|S|= \Theta(2^{j_0})$ and $|T|= \Theta(2^{j_0})$.
By Theorem \ref{th:M19}, we have that
\[D(S,T) = \Omega\left( \frac{\sqrt{2^{j_0}\cdot 2^{j_0}} }{j_0}\right) = \Omega\left(\frac{2^{j_0}}{j_0}\right). \]

From the distance definition \eqref{eq:DoubleDist}, we see that the sets of bipartite distances between $S$ and $T$ that we get from different choices of $d \in \Delta$ are disjoint. Thus, to obtain a lower bound for $D(\pts)$, we may sum the above bound over every $d\in \Delta$.
By recalling \eqref{eq:DDrealDoubleThm} and that $j_0\le \log n$, we obtain
\[ D(\pts) =  \Omega\left(\frac{n}{2^{j_0} \log^2 n}\right) \cdot \Omega\left(\frac{2^{j_0}}{j_0}\right)  = \Omega\left(\frac{n}{ \log^3 n}\right). \]
\end{proof}

Finally, we show an upper bound that matches the lower bound of Theorem \ref{th:DistinctDouble}, up to a factor of $\log^2 n$.

\begin{claim}
There exists a set $\pts$ of $n$ points in $\WW^2$, such that $D(\pts)=O \left( \frac{n}{\log n} \right)$.
\end{claim}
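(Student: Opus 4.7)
The plan is to exploit the product structure of the distance function in $\WW^2$. Recall from \eqref{eq:DoubleDist} that $\rho_{\WW^2}(\langle x,y,z,w\rangle,\langle x',y',z',w'\rangle) = \langle (\Delta x)^2 + (\Delta z)^2,\, (\Delta y)^2 + (\Delta w)^2\rangle$. Thus the first coordinate of the distance depends only on the real parts of the two points, and the second coordinate depends only on the imaginary parts. Consequently, if the set of real parts of $\pts$ determines $D_1$ distinct squared Euclidean distances in $\R^2$ and the set of imaginary parts determines $D_2$, then $D(\pts) \le D_1 \cdot D_2$. So the task reduces to exhibiting a point set whose real and imaginary parts each have few distinct distances.

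For convenience assume $n = m^4$ for an integer $m$; the general case follows by rounding. Take
\[ \pts = \left\{\langle x, y, z, w\rangle \ :\ x, y, z, w \in \{1, 2, \dots, m\}\right\}, \]
so $|\pts| = m^4 = n$. The set of real parts of $\pts$ is exactly the $m\times m$ integer grid $\{1,\dots,m\}^2$ in $\R^2$, which by the classical Erd\H{o}s lattice construction determines $\Theta(m^2/\sqrt{\log m}) = \Theta(\sqrt{n}/\sqrt{\log n})$ distinct (squared) distances. The same bound holds for the set of imaginary parts.

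Combining these bounds via the product observation above yields
\[ D(\pts) \le D_1 \cdot D_2 = O\!\left(\frac{\sqrt{n}}{\sqrt{\log n}}\right) \cdot O\!\left(\frac{\sqrt{n}}{\sqrt{\log n}}\right) = O\!\left(\frac{n}{\log n}\right), \]
as required. There is no real obstacle here: the construction is a direct four-dimensional analogue of Erd\H{o}s's grid, and the only thing to verify is that the distance in $\WW^2$ genuinely splits into independent contributions from the real and imaginary parts, which is immediate from \eqref{eq:DoubleDist}.
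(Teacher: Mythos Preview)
Your proof is correct and takes essentially the same approach as the paper: both exploit the product structure of $\rho_{\WW^2}$ and take $\pts$ to be the Cartesian product of two copies of a planar set with few distinct distances, using Erd\H{o}s's $n^{1/4}\times n^{1/4}$ grid as that planar set. The paper phrases it slightly more abstractly (taking an arbitrary $\pts'\subset\R^2$ of size $\sqrt{n}$ with $\Theta(\sqrt{n}/\sqrt{\log n})$ distances, citing the grid as an example), but your construction is exactly the same set.
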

\begin{proof}
Let $\pts'\subset \RR^2$ be a set of $n^{1/2}$ points, such that $D(\pts') = \Theta(n^{1/2}/\sqrt{\log n})$. For example, we could take $\pts$ to be an $n^{\frac 1 4} \times n^{\frac 1 4}$ square grid \cite{erd46}.
Set
\[ \pts = \left\{\langle x, y, z, w\rangle \in \WW^2 :\ (x, z) \in \pts' \quad \text{ and } \quad (y, w) \in \pts' \right\}. \]

Note that $\pts$ is a set of $n$ points in $\WW^2$.
From the distance definition (\ref{eq:DoubleDist}), we get that
\[ D(\pts)= \Theta(n^{1/2}/\sqrt{\log n}) \cdot \Theta(n^{1/2}/\sqrt{\log n}) =\Theta(n/\log n). \]
\end{proof}

\section*{Acknowledgments}
First and foremost, the author wishes to thank his research mentor Adam Sheffer, who originally posed the problems studied here, served as an essential resource throughout the summer, and provided copious feedback on the many drafts of this paper. He would also like to thank everyone else involved with the 2019 CUNY REU for their many helpful conversations and suggestions, especially Surya Mathialagan, on whose results he relied in Theorem \ref{th:DistinctDouble}. He also wishes to thank Tamas Fleiner, Alex Iosevich, Benjamin Lund, and Victoria Talvola. .

\end{document}